\titleformat{\subsubsection}[runin]
       {\normalfont\bfseries}
       {\thesubsubsection}
       {0.5em}
       {}
\titlespacing*{\subsubsection}{0pt}{0pt}{0.5em}
\newtheorem*{rep@theorem}{\rep@title}
\newcommand{\newreptheorem}[2]{%
\newenvironment{rep#1}[1]{%
 \def\rep@title{#2 \ref{##1}}%
 \begin{rep@theorem}}%
 {\end{rep@theorem}}}
\def\bkR{{\rm I\kern-.17em R}}
\def\bkZ{{\rm Z\kern-.28em Z}}
\def\Z{\bkZ}
\DeclareMathOperator{\Sym}{Sym}
\DeclareMathOperator{\Span}{span}
\DeclareMathOperator{\Int}{int}
\DeclareMathOperator{\lk}{link}
\theoremstyle{plain}
\newtheorem{theorem}{Theorem}
\newtheorem{lemma}[theorem]{Lemma}
\newtheorem{proposition}[theorem]{Proposition}
\newtheorem{corollary}[theorem]{Corollary}
\newtheorem{scholion}[theorem]{Scholion}
\theoremstyle{definition}
\newtheorem{definition}[theorem]{Definition}
\newtheorem*{definition*}{Definition}
\newtheorem{question}[theorem]{Question}
\numberwithin{equation}{section}
\begin{document}

\title{Multisections of piecewise linear manifolds}
\author{J.\thinspace Hyam Rubinstein and Stephan Tillmann}

\begin{abstract}
Recently Gay and Kirby described a new decomposition of smooth closed $4$--manifolds called a trisection. This paper generalises Heegaard splittings of 3-manifolds and trisections of 4-manifolds to all dimensions, using triangulations as a key tool. In particular, we prove that every closed piecewise linear $n$--manifold has a \emph{multisection}, i.e.\thinspace can be divided into $k+1$ $n$--dimensional 1--handlebodies, where $n=2k+1$ or $n=2k$, such that intersections of the handlebodies have spines of small dimensions. 
Several applications, constructions and generalisations of our approach are given.
\end{abstract}

\primaryclass{52B70, 53C21, 57N15}
\keywords{piecewise linear manifold, combinatorial manifold, triangulation, handlebody, CAT(0), cubing}
\maketitle


\section{Introduction}

In \cite{GK}, David Gay and Rob Kirby introduced a beautiful decomposition of an arbitrary smooth, oriented closed $4$--manifold, called \emph{trisection}, into three handlebodies glued along their boundaries as follows. Each handlebody is a boundary connected sum of copies of $S^1 \times B^3,$ and has boundary a connected sum of copies of $S^1 \times S^2.$ The triple intersection of the handlebodies is a closed orientable surface $\Sigma,$ which divides each of their boundaries into two $3$--dimensional handlebodies (and hence is a Heegaard surface). These  $3$--dimensional handlebodies are precisely the intersections of pairs of the 4--dimensional handlebodies.

In dimensions $\le 4,$ there is a bijective correspondence between isotopy classes of smooth and piecewise linear structures~\cite{C1, C2}, but this breaks down in higher dimensions. This paper generalises Gay and Kirby's concept of a trisection to higher dimensions in the piecewise linear category, and hence all manifolds, maps and triangulations are assumed to be piecewise linear unless stated otherwise. Our definition and results apply to any compact smooth manifold by passing to its unique piecewise linear structure \cite{W}.

The definition of a multisection, which generalises both that of a Heegaard splitting of a 3--manifold and that of a trisection of a 4--manifold, focuses on properties of spines. Let $N$ be a compact manifold with non-empty boundary. The subpolyhedron $P$ is a \emph{spine} of $N$ if $P \subset \Int(N)$ and $N$ PL collapses onto $P.$

\begin{definition}[(Multisection of closed manifold)]\label{def:multisection}
Let $M$ be a closed, connected, piecewise linear $n$--manifold. A \emph{multisection} of $M$ is a collection of $k+1$ piecewise linear submanifolds $H_i \subset M,$ where $0 \le i \le k$ and $n=2k$ or $n=2k+1,$ subject to the following four conditions:
\begin{enumerate}
\item Each $H_i$ has a single $0$--handle and a finite number, $g_i$, of $1$--handles, and is homeomorphic to a standard piecewise linear $n$--dimensional 1--handlebody of genus $g_i.$ 
\item The handlebodies $H_i$ have pairwise disjoint interior, and $M = \bigcup_i H_i.$
\item  The intersection $H_{i_1} \cap H_{i_2} \cap \ldots \cap H_{i_r}$ of any proper subcollection of the handlebodies is a compact, connected submanifold with boundary and of dimension $n-r+1.$ Moreover, it has a spine of dimension $r,$ except if $n=2k$ and $r=k,$ then there is a spine of dimension $r-1.$ Each such submanifold is called a \emph{multisection submanifold}.
\item The intersection $H_{0} \cap H_{1} \cap \ldots \cap H_{k}$ of all handlebodies is a closed, connected submanifold of $M^n$ of dimension $n-k,$ and called the \emph{central submanifold}. 
\end{enumerate}
\end{definition}
It follows from our definitions that the first condition in the above definition is equivalent to
\begin{itemize}
\item[($1'$)] Each $H_i$ has spine a graph with Euler characteristic $1-g_i.$
\end{itemize}
Proposition~\ref{pro:connectivity} implies that the connectivity requirement of the multisection submanifolds in (3) and the central submanifold in (4) follows from the condition on the codimensions of the spines in (3).

In the remainder of this introduction, we discuss some properties of multisections as well as directions for further research. It is our hope that this new structure and its derived invariants will help gain insights into new infinite families of piecewise linear manifolds, if not the realm of all such manifolds. 

\medskip

\textbf{Example.} (The tropical picture of complex projective space.)
Consider the map $\mathbb{C}P^n \to \Delta^n$ defined by
$$[\;z_0\; : \;\ldots\; :\; z_n\;] \;\mapsto\; \frac{1}{\sum|z_k|}\;(\;|z_0|\;,\; \ldots\;,\; |z_n|\;).$$
The \emph{dual spine} $\Pi^n$ in  $\Delta^n$ is the subcomplex of the first barycentric subdivision of $\Delta^n$ spanned by the 0--skeleton of the first barycentric subdivision 
minus the 0--skeleton of $\Delta^n.$ 
This is shown for $n=2$ in Figure\ref{fig:Pi2} and $n=3$ in Figure\ref{fig:Pi3}.
Decomposing along $\Pi^n$ gives $\Delta^n$ a natural
 \emph{cubical structure} with $n+1$ $n$--cubes, and the lower-dimensional cubes that we will focus on are the intersections of non-empty collections of these top-dimensional cubes. Each $n$--cube pulls back to a $2n$--ball in $\mathbb{C}P^n,$ and the collection of these balls is a multisection. For example, if $n=2,$ the 2--cubes pull back to 4--balls, each 1--cube pulls back to $S^1 \times D^2$ and the 0--cube pulls back to $S^1 \times S^1$ as shown in Figure\ref{fig:CP2}.

\begin{figure}[h]
  \centering
 \subfigure[$\Pi^2\subset \Delta^2$]{\includegraphics[height=3.4cm]{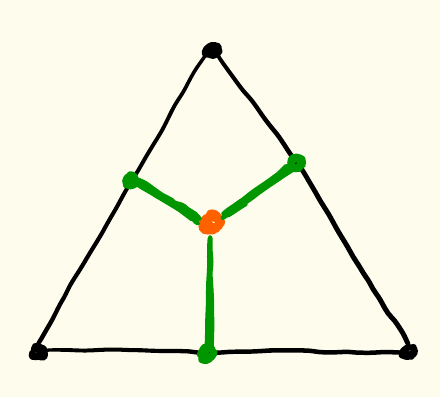}\label{fig:Pi2}}
    \qquad
     \subfigure[$\Pi^3\subset \Delta^3$]{\includegraphics[height=3.4cm]{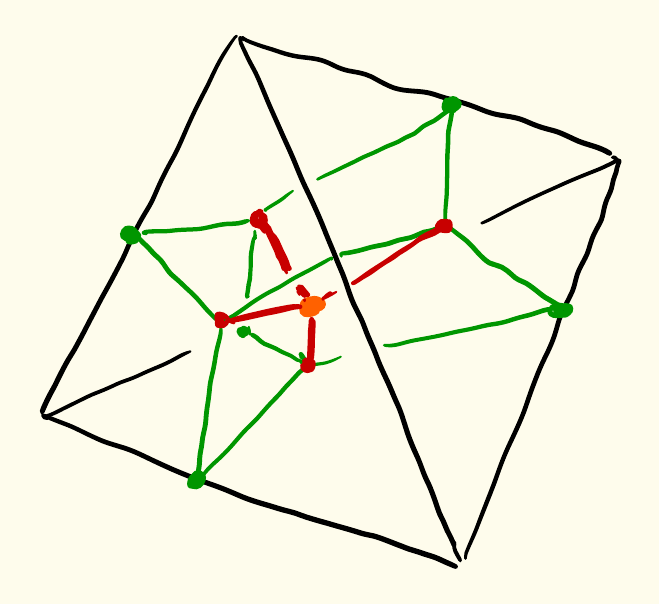}\label{fig:Pi3}}
     \qquad
      \subfigure[$\mathbb{C} P^2 \to \Delta^2$]{\includegraphics[height=3.4cm]{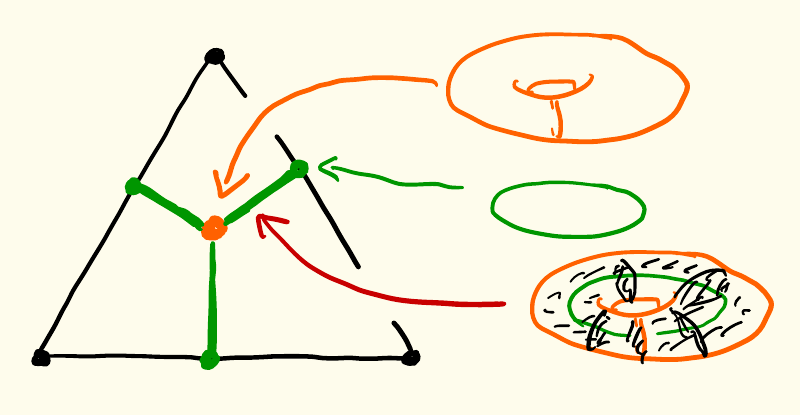}\label{fig:CP2}}
\caption{}
\end{figure}

\medskip

\textbf{Existence (\S\ref{sec:existence}).}
A multisection of a 1--manifold is just the 1--manifold.
The study of multisections in dimension 2 is the study of separating, simple, closed curves. 
A multisection of a 3--manifold is a Heegaard splitting (\S\ref{sec:construction dim 3}). A trisection in the sense of Gay and Kirby~\cite{GK} is a multisection of an orientable 4--manifold with the additional property that the handlebodies $H_j$ have the same genus. After showing that every multisection of an orientable 4--manifold can be modified to a trisection in the sense of \cite{GK} (\S\ref{sec:GKvsRT}), we will start to use the term trisection to apply to all multisections in dimension four, so that we can talk about \emph{bisections} ($n=2,3$), \emph{trisections} ($n=4,5$), \emph{quadrisections} ($n=6,7$), etc.\thinspace without further qualification.

\begin{theorem}
Every closed piecewise linear manifold has a multisection.
\end{theorem}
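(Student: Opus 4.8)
The plan is to build the multisection directly from a triangulation of $M$, using the combinatorics of the barycentric subdivision to cut $M$ into $k+1$ pieces indexed by the vertices of the standard $k$--simplex $\Delta^k$.

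First I would fix a triangulation $\mathcal{T}$ of $M^n$ and pass to its first barycentric subdivision $\mathcal{T}'$, together with the proper $(n+1)$--colouring $c$ of its vertices given by $c(\hat\sigma)=\dim\sigma$. Next I would group the colour set $\{0,1,\dots,n\}$ into $k+1$ blocks: when $n=2k+1$ as the $k+1$ pairs $\{j,n-j\}$, and when $n=2k$ as the $k$ pairs $\{j,n-j\}$ with $0\le j<k$ together with the singleton $\{k\}$. Collapsing the $i$--th block to the vertex $v_i$ of $\Delta^k$ defines a simplicial map $f\colon M=|\mathcal{T}'|\to\Delta^k$ (after a further subdivision, or a small general‑position adjustment, one may take $f$ as well behaved as needed), and I would set $H_i:=f^{-1}(D_i)$, where $D_i=\{x\in\Delta^k: x_i\ge x_l\ \text{for all}\ l\}$ is the dual $k$--cube of $v_i$ in the dual cell decomposition of $\Delta^k$. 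Since the cubes $D_i$ tile $\Delta^k$ and meet along their faces, condition (2) is immediate; and since $D_{i_1}\cap\dots\cap D_{i_r}$ is the dual cell $D(F)$ of the face $F$ spanned by $v_{i_1},\dots,v_{i_r}$, one gets $H_{i_1}\cap\dots\cap H_{i_r}=f^{-1}(D(F))$.

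Conditions (3)--(4) should then follow from two properties of $f$. As $D(F)$ has codimension $r-1$ in $\Delta^k$, general position makes $f^{-1}(D(F))$ a submanifold of dimension $n-r+1$ (and for $r=k+1$, $D(F)$ is the barycentre, so $f^{-1}(D(F))$ is the closed central submanifold of dimension $n-k$); connectedness is then automatic by Proposition~\ref{pro:connectivity}. For the spines: $D(F)$ is a cone with apex $\hat F$, so $f^{-1}(D(F))$ collapses onto the fibre $f^{-1}(\hat F)$, and a simplex of $\mathcal{T}'$ mapping onto $F$ has at most $2r$ vertices (the colours along it are distinct and meet each of the $r$ blocks, each block having at most two colours), hence dimension at most $2r-1$, so the fibre over $\hat F$ inside it has dimension at most $r$; this yields the required $r$--dimensional spines. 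Finally, for ($1'$)/(1): $H_i$ is a regular neighbourhood of $Y_i:=f^{-1}(v_i)$, the full subcomplex of $\mathcal{T}'$ on the colour‑block‑$i$ vertices, and $Y_i$ is a graph because along any simplex of $\mathcal{T}'$ the colours strictly increase, so at most two lie in a pair; when $n=2k+1$ each $Y_i$ is moreover connected (since $M$ is connected and every simplex of one block‑dimension is a face of one of the other — arranged, if need be, by one more subdivision), so $H_i$ is a standard $1$--handlebody of genus $1-\chi(Y_i)$.

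The main obstacle is the case $n=2k$. There the singleton block gives $Y_k$ equal to the discrete set of barycentres of the $k$--simplices of $\mathcal{T}$, so $f^{-1}(D_k)$ is a disjoint union of $n$--balls rather than a connected handlebody; and for the same reason the one multisection submanifold $\bigcap_{i\ne k}H_i$ comes out with a spine of dimension $k$ rather than the required $k-1$. I would repair both at once: choose a spanning tree of the dual graph on the $k$--simplices of $\mathcal{T}$, tube the balls of $H_k$ together along its edges by $1$--handles routed through the interiors of the corresponding $(k+1)$--simplices and excavated from the two neighbouring handlebodies, and then check that the enlarged middle piece is a single $1$--handlebody, that the pieces it borrowed from remain $1$--handlebodies (sliding or cancelling handles as needed), that the tubing compresses the over‑large intersection by one dimension, and that (2)--(4) are undisturbed. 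Carrying out this modification while keeping the whole intersection pattern of the $H_i$ intact — in particular producing a single $0$--handle in every piece — is the technical heart of the argument; the rest is bookkeeping with barycentric subdivisions and the dual cells of $\Delta^k$.
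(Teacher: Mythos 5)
Your argument for the odd-dimensional case is essentially the paper's own: pass to the first barycentric subdivision, colour vertices by the dimension of the original face, group the $n+1$ colours into $k+1$ blocks of size two, send each block to a vertex of $\Delta^k$, and take the preimages of the dual cubes. Your specific pairing $\{j,\,n-j\}$ differs from the paper's $\{2i,\,2i+1\}$, but this is immaterial: the only property used is that each $n$-simplex of the subdivision meets every block in exactly two vertices, so the coordinate computation of the cube dimensions of the fibres goes through unchanged, as does the collapse of $f^{-1}(D(F))$ onto $f^{-1}(\hat F)$ and the ensuing dimension count for the spines. (The hedge about a further subdivision or general-position adjustment is not needed; the simplicial map already does the job.)

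The genuine gap is the even-dimensional case, and you yourself flag it. You correctly diagnose both symptoms: $Y_k$ is a discrete set of barycentres, so $H_k$ is a disjoint union of balls rather than a 1-handlebody, and $\bigcap_{i\ne k}H_i$ has a spine one dimension too high. But the tubing-along-a-spanning-tree modification you propose is precisely where the difficulty lives, and you leave it as ``the technical heart of the argument'' rather than carrying it out. Making $H_k$ connected is only the first issue: you would also have to show that the excavated neighbouring pieces remain 1-handlebodies, that the full stratified intersection pattern is undisturbed, and, separately, that the oversized spine of $\bigcap_{i\ne k}H_i$ drops in dimension; none of this is routine. The paper sidesteps all of it by modifying the triangulation rather than the handlebodies. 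Every $n$-simplex of the barycentric subdivision has a unique $(n-1)$-face disjoint from the singleton block, shared with exactly one other $n$-simplex; a Pachner move of type $2\text{--}n$ on each such double simplex introduces a new edge between the two singleton-block barycentres without changing the vertex set or the partition. This makes $\Gamma_k$ connected (with the paper's choice of singleton block --- the top dimension $2k$ rather than your middle dimension $k$ --- the new edges are exactly the dual graph of the original triangulation, so connectivity is immediate), and it produces circle links of codimension-two faces that supply a free $(k-1)$-face on every $k$-cube of the spine of $\bigcap_{i\ne k}H_i$, which is what lets one collapse that spine to dimension $k-1$. To complete your proof you should either adopt this Pachner-move device (it works with your singleton too, though connectivity of $\Gamma_k$ is then less transparent) or else work out the tubing construction in full, which is substantially harder than your sketch suggests.
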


\textbf{Sketch of proof.} Suppose $M$ is a closed, connected, piecewise linear manifold of dimension $n.$ Our strategy is to construct a piecewise linear map $\phi\co M\to \sigma,$ where $\sigma$ is a $k$--simplex for $k$ satisfying $n=2k$ or $n=2k+1,$ and to obtain the multisection as the pull back of the cubical structure of $\sigma$ to $M.$ Our map $\phi$ will have the property that each vertex of $\sigma$ pulls back to a connected graph, and each top-dimensional cube pulls back to a regular neighbourhood of this graph, a 1--handlebody.

\begin{figure}[h]
  \centering
	{\includegraphics[height=3cm]{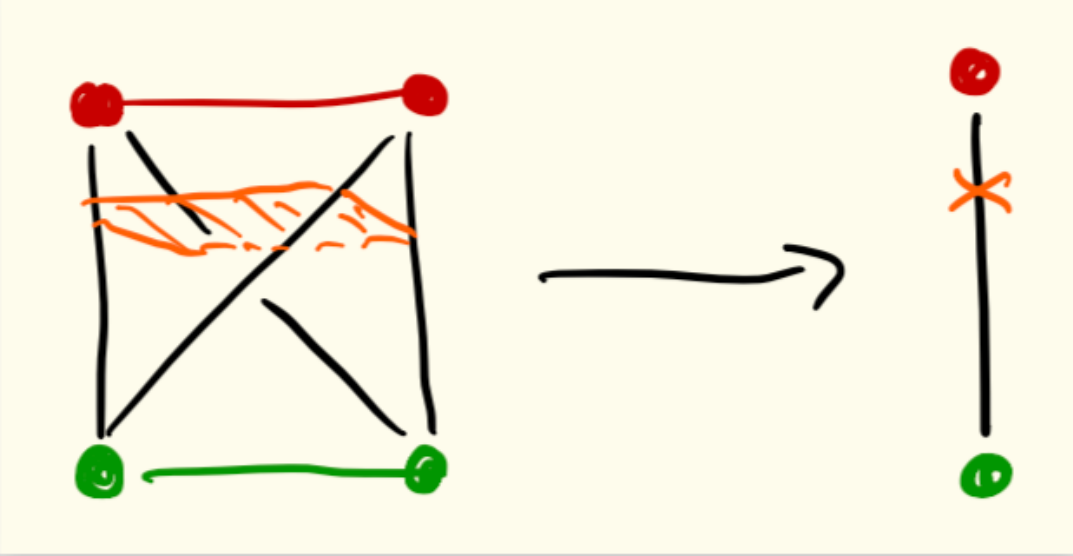}}
\caption{Partition map for $n=3$}\label{fig:2-d-partition}
\end{figure}

We use triangulations to define $\phi.$ Since a piecewise linear manifold admits a piecewise linear triangulation $|K|\to M$ (where the link of each simplex in the simplicial complex $K$ is equivalent to a standard piecewise linear sphere) we can and will assume that such a triangulation of $M$ is fixed. Since $M$ is closed, there is a finite number of simplices in the triangulation, and $\phi$ is uniquely determined by a partition of the vertices of the triangulation into $k+1$ sets, and a bijection between the sets in this partition and the vertices of $\sigma.$ We call such a map $M\to \sigma$ a \emph{partition map}. To ensure that the cubical structure of $\sigma$ pulls back to submanifolds with the required properties, we determine suitable combinatorial properties on the triangulation. In the odd-dimensional case, we show that the first barycentric subdivision of any triangulation has a suitable partition (see \S\ref{sec:construction dim 3} and \S\ref{sec:construction dim odd}). Moreover the $r$--dimensional spine of the intersection of $r$ handlebodies meets each top-dimensional simplex in $M$ in exactly one $r$--cube. In even dimensions, we obtain an analogous result after performing bistellar moves on this subdivision (see  \S\ref{sec:construction dim 4} and \S\ref{sec:construction dim even}). \qed

We say that a triangulation \emph{supports a multisection} if there is a partition of the vertices defining a partition map $\phi\co M\to \sigma$ with the property that the pull back of the cubical structure is a multisection. Special properties of triangulations may imply special properties of the supported multisections and vice versa. For instance, special properties of a Heegaard splitting of a 3--manifold are shown in \cite{HRST2015} to imply special properties of the dual triangulation. The cornerstone of the modern development of Heegaard splittings is the work of Casson and Gordon~\cite{CG}, and it is a tantalising problem to generalise this to higher dimensions.

\medskip

We next discuss a number of general structure results.

\medskip

\textbf{Recursive structure and generalisations (\S\ref{sec:recursive}).}
A multisection of $M$ induces a stratification of the boundary of each handlebody into lower dimensional manifolds.
We will refer to this as the \emph{recursive structure} of a multisection. For example, for a trisection ($n=4, 5$) the boundaries of each handlebody are divided into two submanifolds. For a quadrisection ($n=6, 7$) the boundaries of the handlebodies are divided into three submanifolds with connected pairwise intersections. These decompositions satisfy part of the definition of a multisection, namely the submanifolds have spines of small dimensions, but the top dimensional submanifolds are not necessarily handlebodies. From the viewpoint of Martelli's complexity theory~\cite{Mar2010}, such generalised multisections may be a fruitful approach to the study of classes of examples. For instance, a decomposition of a 4--manifold into 4--dimensional 1-- or 2--handlebodies is a decomposition into 4--manifolds of complexity 0.

\medskip

\textbf{Non-positively curved cubings from multisections (\S\ref{sec:CAT(0)}).}
The partition map $\phi\co M \to \sigma$ can be used to pull back the cubical structure of the target simplex. This gives a natural cell decomposition of the submanifolds in a multisection, with cells of very simple combinatorial types. In the case of the closed, central submanifold $\Sigma=H_0 \cap H_1 \cap \dots \cap H_{k}$ this is a cubing. We show that a triangulation and a partition map can be chosen such that the cubing of $\Sigma$ satisfies the Gromov link conditions \cite{Gr}, and hence is non-positively curved:
\begin{theorem}\label{thm:CAT(0)}
Every piecewise linear manifold has a triangulation supporting a multisection, such that the central submanifold has a non-positively curved cubing.
\end{theorem}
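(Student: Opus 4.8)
The plan is to run the construction of the existence theorem and then subdivide until the cubing $\mathcal C$ that it induces on the central submanifold $\Sigma = H_0 \cap \cdots \cap H_k \subset M$ (described in \S\ref{sec:CAT(0)}) satisfies the Gromov link condition \cite{Gr}. Recall that $\Sigma = \phi^{-1}(b)$ for $b$ the barycentre of the target simplex $\sigma$ of the partition map $\phi \colon M \to \sigma$, and that each $n$--simplex $\tau$ of the triangulation $K$ whose vertex set meets all $k+1$ parts of the partition contributes to $\Sigma$ the cell $\Sigma \cap \tau \cong \prod_{j=0}^{k}\Delta^{a_j(\tau)}$ with $\sum_j a_j(\tau) = n-k$; when the partition is the ``dimension modulo $k+1$'' colouring of a barycentric subdivision (as in \S\ref{sec:construction dim odd}, and after the bistellar moves of \S\ref{sec:construction dim even} when $n = 2k$), each colour class meets each $n$--simplex in at most two vertices, so $a_j(\tau) \le 1$ and $\Sigma \cap \tau$ is an honest $(n-k)$--cube; these cubes glue along common faces to the cube complex $\mathcal C$. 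By Gromov's criterion, $\mathcal C$ is non-positively curved precisely when the link of every vertex of $\mathcal C$ is a flag simplicial complex, and since $\Sigma$ is a closed manifold these links are triangulated spheres — so the task is to exclude empty simplices from them.

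The first step is an explicit combinatorial description of $\mathcal C$ near a vertex in terms of $K'=\operatorname{sd}K$. A top simplex of $K'$ is a chain $\rho_0 \subsetneq \cdots \subsetneq \rho_n$ of simplices of $K$ with $\dim \rho_i = i$; its colours read $0,1,\dots,k,0,1,\dots$, and the cube it contributes to $\mathcal C$ is $\prod_{j}[\widehat{\rho_j},\widehat{\rho_{j+k+1}}]$, where $\widehat{(\cdot)}$ denotes barycentre. Hence every vertex of $\mathcal C$ is the barycentre of a \emph{rainbow} $k$--simplex of $K'$, that is, a simplex $\{\widehat{\alpha_0},\dots,\widehat{\alpha_k}\}$ arising from a chain $\alpha_0 \subsetneq \cdots \subsetneq \alpha_k$ in $K$ whose dimensions are pairwise distinct modulo $k+1$; and the cubes of $\mathcal C$ at this vertex — equivalently the simplices of its link — are described by the ways of lengthening and modifying the chain $\alpha_0 \subsetneq \cdots \subsetneq \alpha_k$ inside the face poset of $K$. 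So the link of the vertex is assembled, via a join over the ``gaps'' in the chain, out of links of simplices of the barycentric subdivision $K'$, each of which is itself an order complex of a subposet of the face poset of $K$.

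The second step is to show that after a further barycentric subdivision (working with $\operatorname{sd}^2 K$ and the corresponding colouring, redoing the bistellar moves when $n = 2k$) every such vertex link of $\mathcal C$ is flag. Order complexes are always flag, which handles each ``local'' piece; the point is that the subdivision makes the gap-filling pieces sufficiently independent that pairwise compatibility of a family of cubes around a vertex forces them into a single common cube. Concretely, an empty square in a vertex link of $\mathcal C$ would produce four rainbow $k$--simplices of the subdivided complex, pairwise spanning a square of $\mathcal C$ but with no common $3$--cube, hence four chains in $K$ that are pairwise completable to a common longer chain but not simultaneously so — and the rigidity of chains in an iterated barycentric subdivision rules this out, together with its higher-dimensional analogues. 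When $\dim\Sigma = 2$ (that is, $n = 3, 4$) this amounts to the statement that each vertex of the central surface lies in at least four squares, which can be verified directly.

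The step I expect to be the main obstacle is the even-dimensional case, where the multisection is produced only after the bistellar moves of \S\ref{sec:construction dim even} — these are precisely what lower the spine dimension of the central cube to $k-1$ when $n = 2k$ — so one must check that those moves can be performed on the subdivided triangulation without destroying flagness, equivalently that the cubing they leave on $\Sigma$ is still of the order-complex type analysed above. A secondary difficulty is organisational: verifying the flag condition in all dimensions for general $k$, rather than merely ruling out empty triangles as suffices when $\dim\Sigma = 2$, which calls for a uniform lemma on chains in iterated barycentric subdivisions in place of a dimension-by-dimension check.
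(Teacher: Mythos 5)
Your odd-dimensional strategy is essentially right, though you are carrying more machinery than necessary. You do not need to pass to $\operatorname{sd}^2 K$ in the odd case: if $K$ is already a first barycentric subdivision (and hence a flag complex), the paper simply observes that the link in the cubed $\Sigma$ of a vertex $c$ — which is the barycentre of a $k$--simplex $\Delta^k$ of $K$ meeting all $k+1$ partition sets — is \emph{simplicially isomorphic} to $\lk_K(\Delta^k)$, and the link of any simplex in a flag complex is flag. That single isomorphism replaces your chain/join analysis and the worry about ``rigidity of chains in an iterated barycentric subdivision''; nothing more is needed, and it immediately upgrades your dimension-two verification to all dimensions.

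The gap is in the even-dimensional case, and it is exactly where you flagged your own uncertainty. Your plan is to subdivide further, redo the $2$--$n$ Pachner moves of \S\ref{sec:construction dim even} to get the spine-dimension drop, and then check that flagness survives. But the paper explicitly found that for $k>2$ the cubing produced by those moves \emph{fails} to be flag — the new edge introduced between the two $P_k$--vertices of a double $n$--simplex creates empty simplices in links that no further subdivision-before-the-move repairs, because the move itself is what breaks the order-complex structure you are relying on. The paper therefore does not perform bistellar moves at all in the even case. Instead, it takes $L'$, uses the fact that the dual $1$--skeleton of a barycentric subdivision is bipartite to $2$--colour the top simplices of $L'$ into $V_0$ and $V_1$, then sets $K=L''$ with the partition $P_0 = (\text{vertices of }L')\cup(\text{barycentres of }n\text{--simplices in }V_0)$, $P_1 = (\text{barycentres of edges of }L')\cup(\text{barycentres of }n\text{--simplices in }V_1)$, and $P_j$ $(2\le j\le k)$ the barycentres of $(2j-2)$-- and $(2j-1)$--simplices of $L'$. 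The singleton in each top simplex of $K$ is then mapped to $0$ or $1$, the required free faces for the collapse when $r=k$ come from the bipartite structure rather than from a Pachner move, and $K$ remains a barycentric subdivision — hence flag — so the odd-dimensional link isomorphism applies verbatim. Your proposal needs this alternative partition; trying to rescue the bistellar moves will not work.
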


Since a $(2k+1)$--manifold has a central submanifold of dimension $k+1,$ this result produces manifolds with non-positively curved cubings in each dimension. We also remark that our construction yields cubings with precisely one top-dimensional cube in the central submanifold for each top-dimensional simplex in the triangulation of the manifold.

\begin{question}
What conditions does a non-positively curved cubed $k$--manifold need to satisfy so that it is PL--homeomorphic to the central submanifold in a multisection of a $(2k+1)$--manifold or a $2k$--manifold?
\end{question}

\medskip

\textbf{Structure of fundamental group (\S\ref{sec:fundamental group}) and higher homotopy groups (\S\ref{sec:Recursive structure and higher homotopy groups}).}
A multisection of a manifold gives a decomposition of its fundamental group as a \emph{generalised} graph of groups (which is defined exactly as a graph of groups with the only modification that the homomorphisms from edge groups to vertex groups need not be monomorphisms; cf.\thinspace \cite[\S6.2]{Geo}). In particular, since every finitely presented group is the fundamental group of a closed 4--manifold, we obtain the following decomposition theorem, which may be of independent interest.

\begin{proposition}\label{pro:structure of fund gp}
Every finitely presented group has a generalised graph of groups decomposition as shown in Figure~\ref{fig:structure of groups_intro}, where the vertex groups $\Gamma_0, \Gamma_1, \Gamma_2$ are free of rank $\le g,$ $\Gamma_g$ is the fundamental group of a closed orientable surface of genus $g,$ all edge groups are naturally isomorphic with $\Gamma_g$ and the oriented edges represent epimorphisms.
\end{proposition}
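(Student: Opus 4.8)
The plan is to realise the given finitely presented group $G$ as the fundamental group of a closed, orientable, connected piecewise linear $4$--manifold $M$ --- classically one may take $M=\partial N$ for $N$ a regular neighbourhood in $\mathbb{R}^5$ of a presentation $2$--complex of $G$ --- and then read the claimed decomposition off a trisection of $M$. Since every closed piecewise linear manifold has a multisection and $n=4=2k$ with $k=2$, we obtain a trisection $M=H_0\cup H_1\cup H_2$, and we begin by recording the structure it carries. Each $H_i$ is a $4$--dimensional $1$--handlebody of genus $g_i$, so $\pi_1(H_i)$ is free of rank $g_i$; we write $\Gamma_i$ for it. By condition~(3) of Definition~\ref{def:multisection} together with its exceptional clause (applicable here, as $n=2k$ and $r=k$), each double intersection $H_{ij}=H_i\cap H_j$ is a connected compact $3$--manifold with a $1$--dimensional spine, hence a handlebody, and by condition~(4) the central submanifold $\Sigma=H_0\cap H_1\cap H_2$ is a closed connected surface. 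A short local argument --- using that the $H_i$ have pairwise disjoint interiors and together cover $M$ --- shows $\partial H_{ij}=\Sigma$ and $\partial H_i=H_{ij}\cup_\Sigma H_{ik}$, so that $\partial H_i=\#^{g_i}(S^1\times S^2)$ inherits a genus--$g$ Heegaard splitting, $g$ being the genus of $\Sigma$. Two consequences: $g_i\le g$, since the Heegaard genus of $\#^{g_i}(S^1\times S^2)$ equals $g_i$; and the inclusion--induced homomorphisms $\iota_i\co\pi_1(\Sigma)\to\pi_1(H_i)$ and $\pi_1(\Sigma)\to\pi_1(H_{ij})$ are all surjective (the first because $\Sigma$ is a Heegaard surface of $\partial H_i$ and $\pi_1(\partial H_i)\to\pi_1(H_i)$ is an isomorphism, the second because $\Sigma$ bounds the handlebody $H_{ij}$). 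Finally, $H_{ij}$ is two--sided in the orientable manifold $M$, hence orientable, so $\Sigma=\partial H_{ij}$ is an orientable surface, and we write $\Gamma_g$ for $\pi_1(\Sigma)$.

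The core of the argument is the identity $\pi_1(M)\cong\Gamma_g/(N_0N_1N_2)$, where $N_i=\ker\iota_i$. We would deduce it from van Kampen's theorem applied to the open cover of $M$ by regular neighbourhoods $U_i$ of the $H_i$: its nerve is a $2$--simplex (all triple overlaps being nonempty), with $U_i\simeq H_i$, $U_i\cap U_j\simeq H_{ij}$ and $U_0\cap U_1\cap U_2\simeq\Sigma$, so $\pi_1(M)$ is the colimit of the associated diagram of fundamental groups. Since $\pi_1(\Sigma)$ surjects onto each $\pi_1(H_{ij})$ and each $\pi_1(H_i)$, every group in the diagram has the same image in the colimit as $\pi_1(\Sigma)$ does, and a short diagram chase identifies that common quotient as $\Gamma_g/(N_0N_1N_2)$. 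This colimit computation is the step I expect to require the most care: one must check that the triple overlap $\Sigma$, although not simply connected, contributes no additional generators or relations --- here contractibility of the nerve together with connectedness of $\Sigma$ is what is used --- and that the two surjectivity statements are precisely what collapses the three free vertex groups onto quotients of the single surface group $\Gamma_g$.

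It then remains to recognise $\Gamma_g/(N_0N_1N_2)$ as the fundamental group of a generalised graph of groups of the shape of Figure~\ref{fig:structure of groups_intro}: take the underlying graph to be the star with a central vertex carrying $\Gamma_g$ and three outer vertices carrying $\Gamma_0,\Gamma_1,\Gamma_2$, and let each edge carry $\Gamma_g$, with the homomorphism into the central vertex the identity and the homomorphism into the $i$th outer vertex the epimorphism $\iota_i$, recorded by the orientation of the edge. By the universal property of the fundamental group of a graph of groups --- and since the underlying graph is a tree, there are no stable letters --- this group equals $\Gamma_g/(N_0N_1N_2)\cong\pi_1(M)=G$, completing the argument. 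The decomposition is genuinely \emph{generalised}, rather than an honest graph of groups, precisely because the $\iota_i$ fail to be injective whenever $g\ge1$; the remaining ingredients, the local analysis of how the $H_i$ meet and the classical realisation of $G$ by a closed orientable $4$--manifold, are routine.
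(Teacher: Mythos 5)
Your proposal is correct and follows essentially the same route as the paper: realise $G$ as $\pi_1$ of a closed orientable PL $4$--manifold, trisect it, and compute $\pi_1(M)$ via van Kampen using the surjectivity of the inclusion--induced maps from $\pi_1(\Sigma)$ onto the $\pi_1(H_i)$ and $\pi_1(H_{ij})$. The only cosmetic differences are that the paper iterates the two--set generalised van Kampen theorem rather than invoking the three--set colimit version in one step, and that it leaves the rank bound $g_i\le g$ implicit, which you supply via the Heegaard genus of $\#^{g_i}(S^1\times S^2)$.
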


\begin{figure}[h]
\begin{center}
\begin{tikzpicture}
  \matrix (m) [matrix of math nodes, row sep=1em, column sep=1em]{
    &\Gamma_0& \\
    &\Gamma_g&\\
    \Gamma_1&&\Gamma_{2}& \\};
  \path[-stealth]
  (m-2-2) edge (m-1-2)
              edge (m-3-1)
              edge (m-3-3);
\end{tikzpicture}
\end{center}
\caption{Structure of finitely presented groups}
\label{fig:structure of groups_intro}
\end{figure}
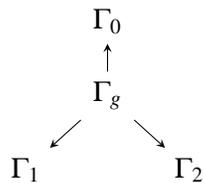

Abrams, Gay and Kirby~\cite{AGK} analysed this structure of the fundamental group in more detail to define \emph{group trisections} and formulate a group theoretic statement equivalent to the smooth Poincar\'e conjecture.

We observe that the recursive structure of a multisection enables both the fundamental group (\S\ref{sec:fundamental group}) and higher homotopy groups (\S\ref{sec:Recursive structure and higher homotopy groups}) to be computed using the multisection submanifolds and the central submanifold. As a corollary, we show that multisections lift to covering spaces. In particular, in higher dimensions the role of the surface group can be played by the fundamental group of a non-positively curved cube complex.

\medskip

\textbf{Invariants.}
The Heegaard genus of a 3--manifold has natural generalisations to multisections. For instance,
each of $S^4$ and $\mathbb{C} P^2$ have trisections with three 4--balls, but they are distinguished by the genera of the pairwise intersections. The \emph{genus of a trisection} of a 4--manifold, say $M = H_0 \cup H_1 \cup H_2,$ can be defined as the minimum (with respect to the lexicographic ordering) of all 4--tuples of the form
$$(\;g(H_i), g(H_j), g(H_k), g(H_i\cap H_j\cap H_k)\;).$$
The \emph{multisection genus} of $M$ is then the minimum genus of any trisection of $M.$ In higher dimensions, genera need to be replaced by other invariants. These invariants are also related to the associated generalised graph of groups decomposition of the fundamental group.

\begin{question}
Are there interesting families of 4--manifolds for which there exists an algorithm to compute a multisection of minimal genus for each member of the family?
\end{question}

\medskip

\textbf{Uniqueness.}
There is a natural stabilisation procedure of multisections. In dimension 3, this increases the genus of both 3--dimensional handlebodies, whilst in higher dimensions, this increases the genus of just one of the top-dimensional handlebodies. The Reidemeister-Singer theorem~\cite{Re, Si}  states that any two Heegaard splittings of a 3--manifold have a common stabilisation.
Using in an essential way the uniqueness up to isotopy of genus $g$ Heegaard splittings of $\#^k (S^2\times S^1)$ due to Waldhausen~\cite{Wald1968}, Gay and Kirby~\cite{GK} show that any two trisections of a 4--manifold have a common stabilisation up to isotopy. Combined with  \S\ref{sec:GKvsRT}, this implies that any two multisections of a 4--manifold also have a common stabilisation up to isotopy. We do not give an independent proof of this fact in this paper.

\begin{question}
Under what conditions is there a common stabilisation for two given multisections of a manifold of dimension at least five? 
\end{question}

Our existence proof constructs multisections dual to triangulations. Conversely, up to possibly stabilising the multisection, one can build triangulations dual to multisections. However, stabilisation in higher dimensions adds summands of $S^1 \times S^{n-k-1}$ to the central submanifold, and hence we expect the equivalence relation generated by stabilisation to be finer than PL equivalence of the dual triangulations since central submanifolds with a negatively curved cubing are aspherical.

\medskip

\textbf{Constructions and examples (\S\ref{sec:examples}).} 
An extended set of examples of trisections of 4--manifolds can be found in \cite{GK}. The recent work of Gay \cite{Gay2015}, Meier, Schirmer and Zupan~\cite{MSZ, MZ-standard, MZ-bridge} gives some applications and constructions arising from trisections of 4--manifolds and relates them to other structures. In this section, we outline some further constructions, focussing on arbitrary dimensions. 
The above existence result shows that 
first barycentric subdivision leads to triangulations supporting multisections. However, more efficient triangulations can be identified using the symmetry representations of \cite{RT} (see \S\ref{sec:constructing with symmetric representations}). We give a number of applications of this approach, including \emph{generalised multisections} and \emph{twisted multisections}. We also discuss \emph{connected sums}, a \emph{Dirichlet construction}, \emph{products} and the case of \emph{manifolds with non-empty boundary.}

\medskip

\textbf{Acknowledgements\ } 
The authors are partially supported under the Australian Research Council's Discovery funding scheme (project numbers DP130103694 and DP160104502). 
The authors would like to thank David Gay for suggesting to the first author that there might be an approach to trisections via triangulations. 


\section{Existence of multisections}
\label{sec:existence}

The outline of the existence proof is given in the introduction. We give a motivation for both the definition of a multisection and the strategy of the existence proof in dimensions three (\S\ref{sec:construction dim 3}) and four (\S\ref{sec:construction dim 4}), followed by the general arguments for arbitrary odd (\S\ref{sec:construction dim odd}) and even (\S\ref{sec:construction dim even}) dimensions. We also clarify the relationship between multisections of 4--manifolds and the trisections of Gay and Kirby (\S\ref{sec:GKvsRT}).

All manifolds, maps and triangulations are assumed to be piecewise linear (PL) unless stated otherwise (in which case we will emphasise this by saying topological manifold, \ldots). Our main reference on PL topology is Rourke and Sanderson~\cite{RS}. A primer can be found in Thurston~\cite[\S3.9]{T}, and a collection of basic definitions and tools in Martelli~\cite[\S2]{Mar2010}. 


\subsection{Closed {3}--manifolds have multisections}
\label{sec:construction dim 3}

We recall the classical existence proof of Heegaard splittings (see, for instance, \cite{He}), which motivates our definition in higher dimensions, and provides a model for the existence proofs.
Suppose that $M$ is a triangulated, closed, connected $3$--manifold, and
there is a partition $\{P_0, P_1\}$ of the set of all vertices in the triangulation, such that 
\begin{enumerate}
\item[$(1_3)$] for each set $P_k,$ every tetrahedron has a pair of vertices in the set; and
\item[$(2_3)$] the union of all edges with both ends in $P_k$ is a connected graph $\Gamma_k$ in $M.$
\end{enumerate}
We can form regular neighbourhoods of each of these graphs $\Gamma_k,$ which are handlebodies $H_0,$ $H_1$ respectively, such that the handlebodies meet along their common boundary $\Sigma,$ which is a normal surface consisting entirely of quadrilateral disks, one in each tetrahedron, separating the vertices in $P_0,$ $P_1$ (see Figure~\ref{fig:2-d-partition}). Hence $\Sigma$ is a Heegaard surface in $M.$ A triangulation with the desired properties is obtained as follows. Suppose $|K|\to M$ is a triangulation of $M,$ and take the first barycentric subdivision $K'$ of $K.$ 
Let $P_0$ be the set of all vertices of $K$ and barycentres of edges of $K;$ and let $P_1$ be the set of all barycentres of the triangles and the tetrahedra of $K.$ Then $\{P_0, P_1\}$ is a partition of the vertices of  $K'$ satisfying $(1_3)$ and $(2_3).$ Moreover, the vertices of the cubulated surface $\Sigma$ have degrees $4$ or $6,$ and hence $\Sigma$ is a non-positively curved cube complex.

Examples of triangulations of manifolds that satisfy $(1_3)$ and $(2_3),$ but are not barycentric subdivisions, are the standard 2--vertex triangulations of lens spaces. See \S\ref{sec:constructing with symmetric representations} for a strategy to identify triangulations dual to multisections.

The partition $\{P_0, P_1\}$ defines a piecewise linear map $\phi\co M \to [0,1]$ by $\phi(P_0)=0$ and $\phi(P_1)=1$. This is often called a \emph{height function} and we refer to it as a \emph{partition map}. The pre-image $\phi^{-1} (\frac{1}{2})$ is a Heegaard surface $\Sigma$ for $M$ as described above. 
The inverse image of any point in the interior of $[0,1]$ is a normal surface isotopic to $\Sigma$. The intersection of this inverse image with any tetrahedron of $\mathcal T$  is a quadrilateral disk ($2$--cube). The inverse image of either endpoint $0$ or $1$ is a graph and its intersection with any tetrahedron is an edge ($1$-cube). The division of the closed interval ($1$--simplex) into two half intervals is the dual decomposition into $1$--cubes. An analogous decomposition is exactly what we will use in higher dimensions. 


\subsection{Closed 4--manifolds have multisections}
\label{sec:construction dim 4}

Let $M$ be a closed, connected $4$--manifold with piecewise linear triangulation $|K|\to M.$ We assume that there is a partition $\{P_0, P_1, P_3\}$ of the set of all vertices of $K$ with the following properties:
\begin{enumerate}
\item[$(0'_4)$]  every 4--simplex meets each of the sets $P_0$ and $P_1$ in two vertices and $P_2$ in a single vertex; and
\item[$(0''_4)$]  the graph $\Gamma_k$ consisting of all edges connecting vertices in $P_k$ is connected for $k =0, 1.$
\end{enumerate}
Note that there are no edges between vertices in $P_2.$

\subsubsection{}\label{sec:4d-partition} We first remark that $M$ has such a triangulation. If $|L|\to M$ is any triangulation, pass to the first barycentric subdivision $K = L'$ of $L.$ Label the vertices of $L'$ as being barycentres of faces of dimension $i$, for $0 \le i \le 4$. This labelling is independent of the $4$--simplex containing the vertex. Now let $P_0$ be the set of all vertices of $L'$ that are vertices or barycentres of edges in $L$; $P_1$ be the set of all vertices of $L'$ that are barycentres of $2$--faces or $3$--faces in $L$; and $P_2$ be the set of all barycentres of $4$--simplices. This is a partition of the desired form.

\begin{figure}[h]
  \centering
\includegraphics[width=\textwidth]{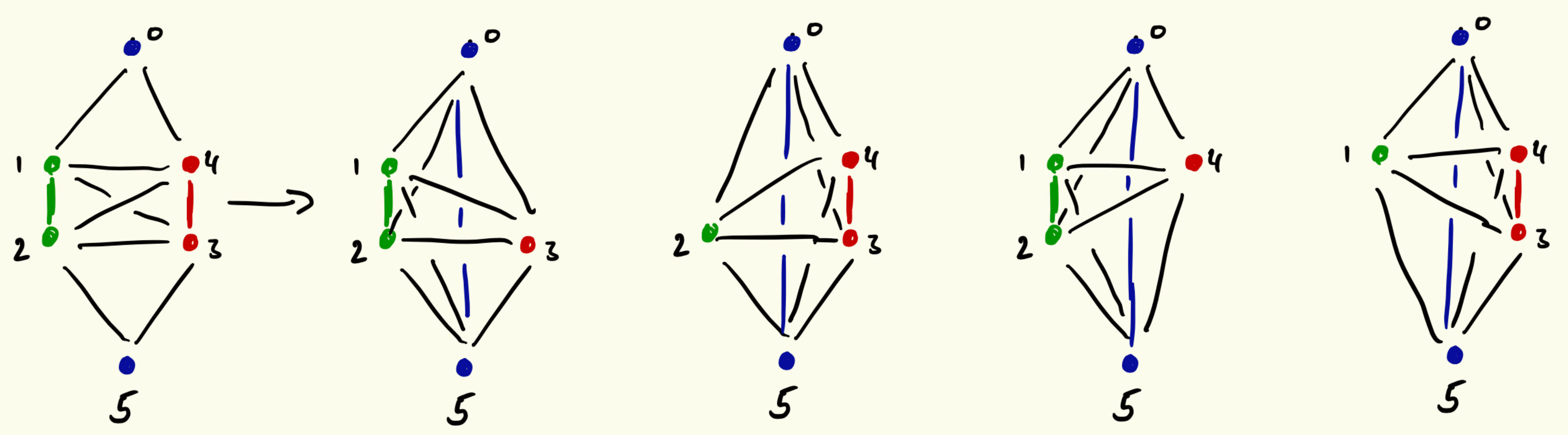}
\caption{Pachner move of type $2-4$}\label{fig:stellar}
\end{figure}

\subsubsection{}\label{sec:4d-stellar} We now apply bistellar operations on $K$ as follows. Each 4--simplex $\sigma$ in $K$ has a unique 3--face $F$ not meeting $P_2,$ and there is a unique 4--simplex $\sigma'$ meeting $\sigma$ in $F.$ The double $4$--simplex $\sigma \cup_F \sigma'$ can be subdivided into four 4--simplices by introducing a new edge $e_F$ between the two vertices in $P_2$ in $\sigma \cup_F \sigma'$ (see Figure~\ref{fig:stellar}). This is a Pachner move of type $2-4$ (cf.\thinspace \cite{Pachner}). Performing this move for every pair of such 4--simplices in $K$ gives a new piecewise linear triangulation $|K_s|\to M,$ with the property that $K$ and $K_s$ have the same vertices, and we do not alter the partition of these vertices. Moreover, all edges of $K$ are edges of $K_s,$ and the only additional edges in $K_s$ are one edge introduced for each double 4--simplex in $K.$ The partition of the vertices of $K$ gives a partition of the vertices of $K_s.$ We will show that this satisfies the following properties:
\begin{enumerate}
\item[$(1_4)$]  Each $4$--simplex has two vertices in two of these sets and one vertex (the \emph{isolated vertex}) in the third.
\item[$(2_4)$]  For each $0 \le i \le  2,$ the graph $\Gamma_i$ consisting of all edges connecting vertices in $P_i$ is connected and contains at least two vertices.
\item[$(3_4)$]  For each $k,$ each 3--face with no vertex in $P_k$ has a 2--face with the property that all but one vertex in the link of the 2--face is in $P_k.$
\item[$(4_4)$]  The degree of each 2--face that meets all three sets in the partition is at least 4.
\end{enumerate}

\begin{figure}[t]
  \centering
 \subfigure[The link condition $(3_4)$]{\includegraphics[height=5.5cm]{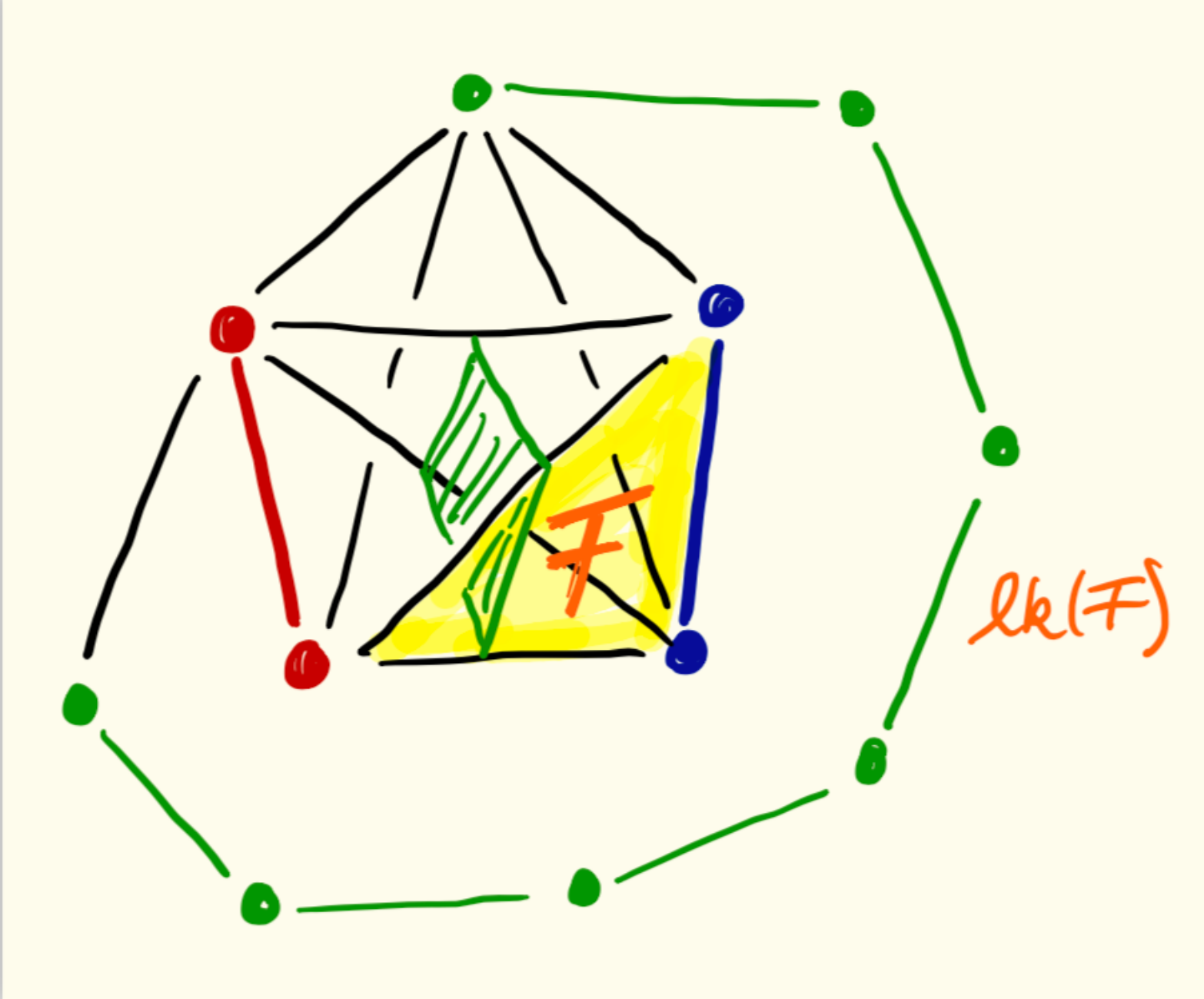}\label{fig:faceF_link}}
    \qquad
     \subfigure[The dual cubical structure of a 2--simplex]{\includegraphics[height=5.5cm]{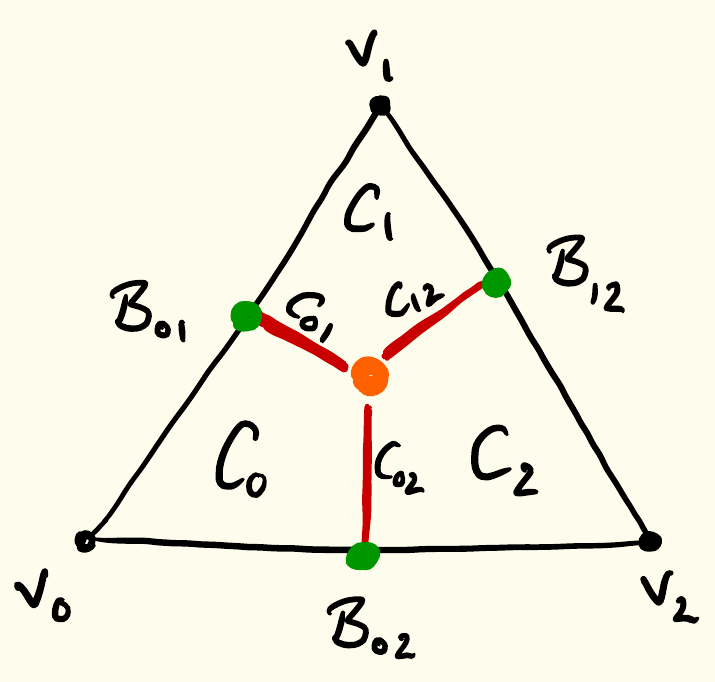}\label{fig:dual cubing 2d}}
\caption{}
\end{figure}

These properties have been singled out, since conditions $(1_4)$, $(2_4)$ and $(3_4)$ will imply all properties of a multisection. These conditions are sufficient but not necessary for obtaining a dual multisection. The last condition gives the additional property that the central submanifold has a non-positively curved cubing. 

\subsubsection{}\label{sec:4d-circlelink}
The triangulation $|K_s|\to M$ satisfies $(1_4)$ and $(2_4)$ by construction. For the next two properties, note that no vertex from the set $P_2$ is isolated since each 4--simplex contains two vertices in $P_2$ due to the bistellar move. Any 3--face $\sigma^3$ with no vertex in $P_k$ has a 1--face $\sigma^1$ with both vertices in $P_2,$ and the remaing vertices in the remaining set $P_j.$ Let $\sigma^2$ be a 2--face of $\sigma^3$ containing $\sigma^1.$ Considering the four 4--simplices incident with $\sigma^1,$ we see that the link of $\sigma^2$ contains a circle $S^1$ triangulated with three 1--simplices having one vertex in $P_j$ and two vertices in $P_k.$ This shows $(3_4).$ To argue that $(4_4)$ holds, first notice that $K_s$ simplicial implies the degree of any 2--face is at least 3. If the 2--face $\sigma^2$ meets all three sets of the partition, then since no vertex from the set $P_2$ is isolated in any 4--simplex of $K_s,$ the degree of $\sigma^2$ is at least 4. 

Hence assume that we have an arbitrary triangulation of $M$ with the property that there is a partition $\{P_0, P_1, P_2\}$ of the set of all vertices satisfying $(1_4)$--$(3_4).$
Define a simplicial map $\phi$ from $M$ to the $2$--simplex $\Delta=\Span\{v_0, v_1, v_2\}$ by sending each vertex in $P_i$ to the vertex $v_i$ of $\Delta$ and extending by affine linear mappings on each $4$--simplex of $\mathcal T.$ Then $\Gamma_i = \phi^{-1}(v_0).$ The dual 1--skeleton of $\Delta$ divides $\Delta$ into three cubes $C_i,$ where $v_i \in C_i.$ Now $\phi^{-1}(C_i) = H_i$ is a regular neighbourhood of $\Gamma_i$ in $M,$ and hence we have a decomposition $M= H_0 \cup H_1 \cup H_2$ into $4$--dimensional 1--handlebodies with pairwise disjoint interiors. Thus, conditions (1) and (2) of Definition~\ref{def:multisection} are satisfied.

\subsubsection{}\label{sec:4d-pl-sub} We next verify that all intersections of the handlebodies are PL submanifolds. First consider $H_{ij}= H_i \cap H_j.$ Since $H_i$ is a regular neighbourhood, it follows that $\partial H_i$ is a 3--dimensional PL submanifold. Now $H_{ij}=H_i \cap H_j = \partial H_i \cap \partial H_j,$ and so the interior of $H_{ij}$ is a 3--dimensional PL submanifold. Since $\partial (H_{ij})$ is collared in $H_{ij},$ it now follows that $H_i \cap H_j$ is a 3--dimensional PL submanifold with boundary. Whence $\partial (H_{ij})= H_0 \cap H_1 \cap H_2 = \Sigma$ is also a PL submanifold.

\subsubsection{}\label{sec:4d-CAT0}  We next show that $(4_4)$ implies that the cubing of $\Sigma$ is non-positively curved. A vertex $v$ of a 2--cube in $\sigma$ is contained on a 2--face $F$ of the triangulation with the property that $F$ meets all three sets $P_0, P_1, P_2$ of the partition. Hence if the degree of $F$ is at least 4, then the degree of $v$ in $\Sigma$ is at least 4, and so the cubing of $\Sigma$ is non-positively curved. 

\subsubsection{}\label{sec:4d-spinedim}  We next show that each $H_{ij}$ has 1--dimensional spine. Here we will use the link condition $(3_4)$. The cubical structure of $\Delta$ consists of the 2--cubes $C_i,$ the 1--cubes $C_{ij} = C_i \cap C_j$ and the 0-cube $C_{ijk} = C_i \cap C_j \cap C_k,$ see Figure~\ref{fig:dual cubing 2d}. We also denote $B_{ij}$ the barycentre of the simplex spanned by $v_i$ and $v_j.$ By the definition of $\Phi$, the natural collapse $C_{ij} \searrow B_{ij}$ lifts to a collapse $H_{ij}  = \Phi^{-1}(C_{ij}) \searrow \Phi^{-1}(B_{ij}).$ Note that $\Phi^{-1}(B_{ij})$ is a union of 1--cubes and 2--cubes. Namely, each 4--simplex with two vertices in $P_i$ and two vertices in $P_j$ meets $\Phi^{-1}(B_{ij})$ in a 2--cube, and each 4--simplex with only one vertex in either $P_i$ or $P_j$ meets $\Phi^{-1}(B_{ij})$ in a 1--cube. The link condition implies that each 2--cube in $\Phi^{-1}(B_{ij})$ has at least one free edge (cf.\thinspace Figure~\ref {fig:faceF_link}), and hence the 2--cube can be collapsed from this edge onto the complementary boundary 1--cubes. Whence $\Phi^{-1}(B_{ij})$ has a 1--dimensional spine, and therefore $H_{ij}$ has a 1--dimensional spine.

\subsubsection{}\label{sec:4d-connected} We complete the proof by showing that each $H_{ij}$ and hence that $\partial (H_{ij}) = \Sigma$ is connected. To be able to use this as an inductive step in the proof of Proposition~\ref{pro:connectivity}, we will only use the fact that the $n$--manifold $M$ has a decomposition $M = H_0 \cup H_1 \cup H_2$ into three compact submanifolds with pairwise disjoint interior, and each $H_i$ and each $H_{ij}$ has a spine of co-dimension at least two. In particular, each connected component of $H_i$ and each connected component of $H_{ij}$ has connected boundary. Let $X$ be a connected component of $H_0.$ If $X \cap H_2 = \emptyset,$ then there is a component $Y$ of $H_1$ such that $\partial X = \partial Y.$ But then $X\cup Y$ is a closed manifold of the same dimension as $M$ and hence $M = X\cup Y.$ This contradicts the fact that $H_2 \neq \emptyset.$ Hence $X \cap H_2 \neq \emptyset,$ and by symmetry $X \cap H_1 \neq \emptyset.$ Now $\partial X$ is a closed $(n-1)$--dimensional manifold and $\partial X = (X \cap H_1) \cup (X\cap H_2).$ Each of $X \cap H_1$ and $X \cap H_2$ is non-empty, $(n-1)$--dimensional and has a spine of co-dimension at least two. Moreover, $(X \cap H_1) \cap (X \cap H_2) = X \cap H_1 \cap H_2,$ which has dimension $n-2,$ and so $X \cap H_1$ and $X \cap H_2$ have disjoint interior. Hence by induction, each of $X \cap H_1,$ $X \cap H_2$ and $X \cap H_1 \cap H_2$ is non-empty, connected and the latter is a closed manifold of dimension $n-2.$ It follows that there is a unique component $Y$ of $H_1$ and a unique component $Z$ of $H_2$ such that $X \cap H_1 = X \cap Y$ and $X\cap H_2 = X \cap Z.$ In particular, $X\cap Y \cap Z$ is non-empty, connected and closed. It follows that $\partial Y = (X \cap Y) \cup (Y \cap Z)$ and $\partial Z = (X \cap Z) \cup (Y \cap Z).$ Whence $X \cup Y \cup Z$ has empty boundary and hence $M = X \cup Y \cup Z.$ This completes the proof.


\subsection{Trisections and multisections of 4--manifolds}
\label{sec:GKvsRT}

We first recall the definition of a $(g,k)$--trisection from \cite{GK}, with the only modification that we state it in the piecewise linear (instead of the smooth) category. Let $Z_k = \natural^k(S^1 \times B^3)$ with $Y_k = \partial Z_k = \sharp^k(S^1 \times S^2).$ Given an integer $g\ge k,$ let $Y_k = Y^-_{k,g} \cup Y^+_{k,g}$ be the standard genus $g$ Heegaard splitting of $Y_k$ obtained by stabilising the standard genus $k$ Heegaard splitting $g-k$ times.

\begin{definition}[(PL variant of Gay-Kirby's definition)]
Given integers $0 \le k \le g,$ a $(g,k)$--trisection of a closed, connected, oriented 4--manifold $M$ is a decomposition of $M$ into three submanifolds $M = H_0 \cup H_1 \cup H_2$ satisfying the following properties:
\begin{enumerate}
\item For each $i=0,1, 2,$ there is a piecewise linear homeomorphism $\Phi\co H_i \to Z_k.$
\item For each $i=0,1, 2,$ taking indices modulo 3, $\Phi(H_i \cap H_{i+1}) = Y^-_{k,g}$ and $\Phi(H_i \cap H_{i-1}) = Y^+_{k,g}.$
\end{enumerate}
\end{definition}

It follows immediately from the definitions that a $(g,k)$--trisection is a multisection of a 4--manifold.
Gay and Kirby~\cite{GK} give two different existence proofs for $(g,k)$--trisections, one using Morse 2--functions and one using handle decompositions. In the first proof, one arranges for the three handlebodies to have the same genus by a homotopy of the Morse map, and in the second this is obtained by adding cancelling pairs of 1-- and 2--handles or 3-- and 4--handles. To connect our multisections to the $(g,k)$--trisections, and hence to prepare for a third existence proof of $(g,k)$--trisections using triangulations, we similarly require a stabilisation result.

As noted in $\cite{GK},$ computing the Euler characteristic of $M$ using the decomposition into handlebodies in a $(g,k)$--trisection gives $\chi(M) = 2+g-3k,$ and hence each of the two constants $g$ and $k$ in the above definition determines the other. 

\begin{lemma}[Stabilisation]\label{lem:stabilisation in dim4}
Let $M$ be a closed, connected, oriented 4--manifold with multisection $M = H_0 \cup H_1 \cup H_2.$ Then the multisection can be modified to a $(g,k)$--trisection with $k = \max(g(H_0), g(H_1), g(H_2)).$
\end{lemma}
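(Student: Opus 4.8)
The plan is to produce a $(g,k)$--trisection from a given multisection $M = H_0 \cup H_1 \cup H_2$ by a sequence of stabilisation moves that do not change $M$ but adjust the three genera $g(H_0), g(H_1), g(H_2)$ until they all equal $k := \max_i g(H_i)$, after which the structure meets the Gay--Kirby definition. So the first thing I would do is define the relevant stabilisation move carefully: given the multisection, attach a cancelling pair of $1$- and $2$-handles (equivalently, a cancelling pair of $3$- and $4$-handles) to one handlebody, say $H_0$, in such a way that $H_0$ is replaced by $H_0 \natural (S^1 \times B^3)$, increasing $g(H_0)$ by one, while $H_1$ and $H_2$ are unchanged as abstract manifolds and the pairwise and triple intersections change only by a standard stabilisation of the Heegaard splittings of their boundaries. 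The natural way to realise this with the tools already in the paper is to pass to the dual picture: a multisection is supported by a partition map $\phi\co M \to \Delta^2$, and one can perform a local modification of the triangulation (a sequence of Pachner moves together with a choice of partition on the new vertices) supported in the preimage of a neighbourhood of the barycentre of an edge of $\Delta^2$, arranged so that exactly one of the graphs $\Gamma_i$ gains a loop. I would verify that this local move leaves conditions $(1_4)$--$(3_4)$ intact, so that the output is again a multisection, with $(g(H_0), g(H_1), g(H_2))$ changed in the single coordinate intended.

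Next I would check the effect of one such move on the recursive structure, i.e.\thinspace on the boundary pieces. The claim is that stabilising $H_0$ replaces the genus-$g'$ Heegaard splitting $\partial H_0 = (H_0 \cap H_1) \cup (H_0 \cap H_2)$ by its genus-$(g'+1)$ stabilisation, replaces $\partial H_1 = (H_0 \cap H_1) \cup (H_1 \cap H_2)$ by a splitting in which the $H_0 \cap H_1$ side is stabilised once, and symmetrically for $\partial H_2$; the central surface $\Sigma = H_0 \cap H_1 \cap H_2$ has its genus increased by one. This is where the fact that stabilisations of Heegaard splittings of $\#^k(S^1 \times S^2)$ are standard and unique up to isotopy (Waldhausen~\cite{Wald1968}) does the work: after enough moves the ambient $3$-manifolds $\partial H_i$ are forced to be the connected sums $\#^k(S^1\times S^2)$ with $k = g(H_i)$, and their Heegaard splittings into the two intersection pieces are forced to be the standard genus-$g$ ones $Y^-_{k,g} \cup Y^+_{k,g}$. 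I would make this precise by first counting: since $\chi(M)$ is fixed and $\chi(M) = 2 + g - 3k$ in a $(g,k)$--trisection, once the three genera are equalised to the common value $k$, the Heegaard genus $g$ of each $\partial H_i$ is determined, and consistency with Waldhausen forces the splitting to be standard.

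The order of operations is then: (i) set $k = \max(g(H_0), g(H_1), g(H_2))$; (ii) apply the $H_0$-stabilisation move $k - g(H_0)$ times, the $H_1$-move $k - g(H_1)$ times, and the $H_2$-move $k - g(H_2)$ times, in any order, tracking the genera; (iii) observe that the resulting multisection has $g(H_0) = g(H_1) = g(H_2) = k$ and each $H_i \cong Z_k$; (iv) identify each $\partial H_i$ with $Y_k = \#^k(S^1\times S^2)$ and invoke Waldhausen to conclude that the induced splitting $\partial H_i = (H_i \cap H_{i+1}) \cup (H_i \cap H_{i-1})$ is isotopic to the standard genus-$g$ splitting, with $g = k$ here since the three genera agree (so no extra stabilisations of the Heegaard surface are needed beyond what the handle stabilisations already produced); (v) fix the labelling $Y^-, Y^+$ consistently around the cycle, using the orientation of $M$, to match the Gay--Kirby conditions. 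I expect the main obstacle to be step (ii)'s bookkeeping, specifically showing that a single elementary stabilisation move really does change only one genus and affects the two neighbouring Heegaard splittings by a single standard stabilisation each, with no hidden knotting of the cancelling handles; this amounts to checking that the local triangulation modification is genuinely supported near the barycentre of one edge of $\Delta^2$ and interacts with the central surface in the model way. The other delicate point is step (iv)--(v): one must confirm that after equalising genera the splitting of $Y_k$ already has the right genus $g = k$ (rather than needing further Heegaard stabilisation, which would force re-examination), and that the cyclic assignment of $Y^\pm$ can be made globally consistent — this uses orientability of $M$ in an essential way, exactly as in \cite{GK}.
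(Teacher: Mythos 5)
Your overall strategy is the same as the paper's: equalise the three handlebody genera by stabilisation moves, then invoke Waldhausen's uniqueness theorem for Heegaard splittings of $\#^k(S^1 \times S^2)$ to produce the required PL homeomorphisms $H_i \to Z_k$ carrying the boundary pieces to $Y^\pm_{k,g}$. So the architecture is right. But there is a genuine gap at the heart of the proof: the stabilisation move itself is never pinned down. You gesture at two realisations — a ``cancelling pair of $1$- and $2$-handles'' and a ``local modification of the triangulation supported near the barycentre of an edge of $\Delta^2$'' — but neither is carried out, and the second in particular does not apply as stated. The lemma's hypothesis is an \emph{arbitrary} multisection of $M$, not one supported by a triangulation, so one cannot ``pass to the dual picture'' without first building a triangulation adapted to the given $H_i$ (a separate and non-trivial task). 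The paper instead gives a direct, intrinsic description that sidesteps triangulations entirely: choose an unknotted arc $\alpha$ properly embedded in the $3$--dimensional handlebody $H_1 \cap H_2$, thicken it to a $4$--dimensional $1$--handle $h_\alpha$ attached to $\Sigma \subset \partial H_0$, and set $H_0' = H_0 \cup h_\alpha$, $H_1' = \overline{H_1 \setminus h_\alpha}$, $H_2' = \overline{H_2 \setminus h_\alpha}$. One then checks directly that $H_1 \searrow H_1'$, $H_2 \searrow H_2'$, that the pairwise intersections remain $3$--dimensional $1$--handlebodies (the unknottedness of $\alpha$ is exactly what guarantees that drilling $\alpha$ out of $H_1 \cap H_2$ leaves a handlebody), and that $g(\Sigma)$ increases by one. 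That verification is the technical content of the lemma, and it is absent from your sketch; your own flagged concern about ``no hidden knotting of the cancelling handles'' is precisely the point the unknotted-arc condition is designed to control.

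A secondary error: in step (iv) you assert that after equalising genera one has $g = k$, ``since the three genera agree.'' This is false. The Euler characteristic constraint $\chi(M) = 2 + g - 3k$ fixes $g$ in terms of $k$ and $\chi(M)$, and generically $g > k$ (e.g.\ the genus-$1$ trisection of $\mathbb{C}P^2$ has $k=0$, $g=1$). Fortunately Waldhausen's theorem gives uniqueness of the genus-$g$ splitting of $Y_k$ for \emph{every} $g \ge k$, so the conclusion you want is still available — but the justification as written is wrong and should be replaced by the observation that the boundary splittings of $\partial H_i$ automatically have the correct genus $g(\Sigma)$, and any two genus-$g$ splittings of $Y_k$ are isotopic. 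Finally, your step (v) about the consistent cyclic assignment of $Y^\pm$ is a reasonable point to raise (the paper is terse here), but it is handled once one has, for each $i$, a PL homeomorphism $\partial H_i \to Y_k$ respecting the splitting; extending each to $H_i \to Z_k$ by radial extension over compression balls (as the paper does) and then adjusting by the obvious symmetry of $Z_k$ deals with the labelling.
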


\begin{proof}
We first show that if $M = H_0 \cup H_1 \cup H_2$ is a multisection with $g(H_2)=g(H_1)=g(H_0),$ then it is a $(g,k)$--trisection with $g=g(H_0 \cap H_1 \cap H_2)$ and $k = g(H_0).$ Indeed, each pairwise intersection $H_i \cap H_{i+1}$ is a compact connected 3--manifold with 1--dimensional spine, and hence is a 3--dimensional 1--handlebody. It has boundary the surface $\Sigma = H_0 \cap H_1 \cap H_2,$ and hence each of the 3--dimensional handlebodies has the same genus $g$. Any genus $g$ Heegaard splitting of $\partial H_i$ satisfies $g \ge k.$ Building on work of Haken~\cite{Ha}, Waldhausen~\cite{Wald1968} showed that there is a unique PL isotopy class of genus $g$ splittings of $Y_k,$ and hence there is a PL homeomorphism taking the splitting $\partial H_i = (H_i \cap H_{i+1}) \cup_\Sigma (H_i \cap H_{i-1})$ to the stabilised standard splitting $Y_k = Y^-_{k,g} \cup Y^+_{k,g}.$ In particular, we can now extend the map from $\partial H_i \to \partial Z_k$ to a piecewise linear homeomorphism $H_i \to Z_k,$ by first applying radial extensions to the boundaries of a complete set of compression 3--balls of $H_i$ and then radial extensions to the boundary 3--spheres of the complementary 4--balls in $H_i.$

To prove the lemma, it now suffices to show that each multisection of $M$ can be modified to a multisection with all three 4--dimensional 1--handlebodies of the same genus $k = \max(g(H_0), g(H_1), g(H_2)).$ To this end, it suffices to show that we can increase the genus of any one of the handlebodies, say $H_0,$ whilst keeping the genera of the other two handlebodies unchanged. Since $\chi(M) = 2 + g(\Sigma) - g(H_0) - g(H_1) - g(H_2),$ this requires increasing the genus of $\Sigma=H_0 \cap H_1 \cap H_2,$ i.e.\thinspace increasing the genus of one of the handlebodies stabilises the Heegaard splittings of the boundaries of the other two handlebodies. We now describe the required \emph{stabilisation move}.

Let $\alpha$ be an arc properly embedded in the 3--dimensional handlebody $H_1 \cap H_2,$ which is \emph{unknotted}, i.e.\thinspace isotopic into $\partial (H_1 \cap H_2) = \Sigma$ keeping its endpoints on $\Sigma$ fixed. Take a 4--dimensional 1--handle $h_\alpha$ based at $\Sigma \subset \partial H_0$ running along $\alpha$ and add this to $H_0,$ creating a decomposition of $M$ into the three handlebodies $H'_0 = H_0 \cup h_\alpha,$ $H'_1 = \overline{H_1 \setminus h_\alpha}$ and $H'_2 = \overline{H_2 \setminus h_\alpha}$ with pairwise disjoint interior. It follows from our construction that $H_1\searrow H'_1,$ $H_2\searrow H'_2,$ and
$\Sigma' = H'_0 \cap H'_1 \cap H'_2$ is $\Sigma$ with the interior of $h_\alpha \cap \Sigma$ (two open discs) deleted and the link of $h_\alpha \cap (H_1 \cap H_2)$ in $H_1 \cap H_2$ (an annulus) added. We therefore have $g(H'_0) = g(H_0)+1,$ $g(H'_1) = g(H_1),$  $g(H'_2) = g(H_2)$ and $g(\Sigma') = g(H'_0 \cap H'_1 \cap H'_2) = g(\Sigma)+1.$ To show that $\{ H'_0, H'_1, H'_2\}$ is again a multisection, it remains to verify the condition on the pairwise intersections. Since $H'_1 \cap H'_2$ is obtained from the 3--dimensional handlebody $H_1 \cap H_2$ by drilling out an unknotted ark, it is also a 3--dimensional handlebody and hence has a 1--dimensional spine. By our construction, the intersections $H'_0 \cap H'_1$ and $H'_0 \cap H'_2$ can be viewed as the handlebodies $H_0 \cap H_1$ and $H_0 \cap H_2$ with a 1--handle attached to them, so are again 1--handlebodies. This completes the proof.
\end{proof}

\begin{figure}[t]
  \centering
\includegraphics[height=5cm]{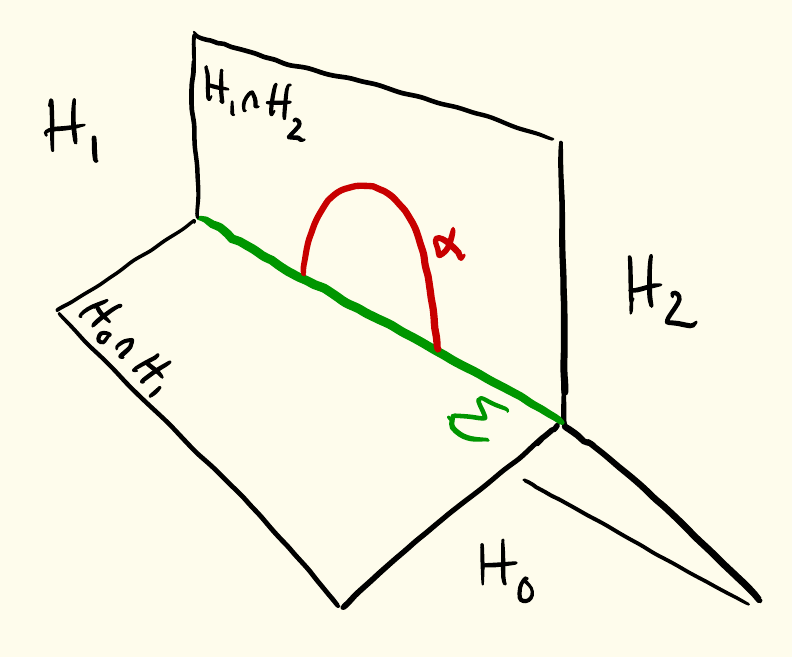}
 \caption{Stabilisation of a multisection}
\end{figure}

The proof of the above lemma shows that there is a natural \emph{stabilisation move}, which allows us to increase the genus of any of the 4--dimensional 1--handlebodies in a multisection by one. (This stabilisation generalises to non-orientable manifolds and higher dimensions.) The lemma shows that the existence proof in dimension 4 of multisections (in the sense of Definition~\ref{def:multisection}) in \S\ref{sec:construction dim 4} implies the existence of $(g,k)$--trisections (in the sense of Gay and Kirby). In light of this 
\begin{center}
\emph{from now onwards, we will simply call a multisection in dimension 4 a trisection,}
\end{center}
even if the 4--dimensional handlebodies do not have the same genus, or the manifold is non-orientable.


\subsection{Generalised multisections and recursive structure}
\label{sec:recursive}

Before we give the existence proof in all dimensions, we give a preliminary result, which will be used in the existence proof and is of independent interest. In particular, it foreshadows our discussion of generalisations of a multisection. 

Inside a multisection of an $n$-dimensional manifold, we see a stratification of the boundary of each handlebody into lower dimensional manifolds. For example, for a trisection where $n=4,5$, we see partition functions on the boundaries of each handlebody, dividing the boundary into two pieces. For a quadrisection, where $n=6,7$, the boundaries of the handlebodies are divided into three pieces. However, the top dimensional pieces are not necessarily handlebodies, whereas all the pieces have spines of low dimension. The same works in all dimensions. Namely for $n$--manifolds, with $n=2k$ or $n=2k+1$, the boundaries of the handlebodies have natural divisions into $k$ regions. Each of these regions has a spine of dimension at most two. However, these regions are not necessarily 1--handlebodies. So these are not multisections in the sense defined in this paper.

The following definition applies to the subdivisions of the multisection submanifolds in the recursive structure. Note that there is no relationship assumed between $n$ and $k.$

\begin{definition}[(Generalised multisection of closed manifold)]\label{def:generalised multisection}
Let $M$ be a closed, connected, piecewise linear $n$--manifold. A \emph{generalised multisection} of $M$ is a collection of $k+1$ piecewise linear $n$--dimensional submanifolds $H_i \subset M,$ where $0 \le i \le k,$ subject to the following three conditions:
\begin{enumerate}
\item Each $H_i$ is non-empty and has a spine of codimension at least two.
\item The submanifolds $H_i$ have pairwise disjoint interior, and $M = \bigcup_i H_i.$
\item  The intersection $H_{i_1} \cap H_{i_2} \cap \ldots \cap H_{i_r}$ of any proper subcollection of the submanifolds ($r \le k$) is a compact submanifold with boundary and of dimension $n-r+1.$ Moreover, it has a spine of codimension at least two.
\end{enumerate}
\end{definition}

What is used in this paper for the general existence proof of multisections is the following relationship between the properties of having low dimensional spines and connectivity of the intersection submanifolds. 

\begin{proposition}\label{pro:connectivity}
Suppose that a closed connected manifold $M$ has a generalised multisection into submanifolds $H_i$ for $0 \le i \le k.$ Then the intersection $H_{i_1} \cap H_{i_2} \cap \ldots \cap H_{i_r}$ of \emph{any} collection of the submanifolds is non-empty and connected. In particular, each $H_i$ and the intersection $H_{0} \cap H_{1} \cap \ldots \cap H_{k}$ is connected. Moreover, $H_{0} \cap H_{1} \cap \ldots \cap H_{k}$ is a closed manifold of dimension $n-k.$
\end{proposition}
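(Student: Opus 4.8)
The plan is to prove the statement by a double induction: an outer induction on $k$ (the number of submanifolds minus one) and, for fixed $k$, an inner induction on the codimension $r$ of the intersections, working downward from $r=k$ to $r=1$ as in the argument of \S\ref{sec:4d-connected}. The base case $k=1$ is just the observation that a closed connected manifold $M = H_0 \cup H_1$ with $H_0, H_1$ compact of the same dimension and sharing only boundary, each with a spine of codimension $\ge 2$ (hence each component with connected boundary), forces $H_0$ and $H_1$ to be connected: if $H_0$ had two components, one of them $X$ would have $\partial X = \partial Y$ for a component $Y$ of $H_1$, so $X \cup Y$ would be a closed $n$--manifold and equal $M$, leaving the rest of $H_0$ empty, a contradiction. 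This is exactly the seed of the inductive step already written out in \S\ref{sec:4d-connected}.

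For the inductive step, fix $k \ge 2$ and assume the proposition for all smaller values. Let $X$ be a connected component of $H_0$. The key move, generalising \S\ref{sec:4d-connected}, is to observe that $\partial X$ is a closed $(n-1)$--manifold decomposed as $\partial X = \bigcup_{i=1}^{k} (X \cap H_i)$, where the $X \cap H_i$ are compact submanifolds of $\partial X$ with pairwise disjoint interior (since $X \cap H_i \cap H_j$ has dimension $n-2$), and each nonempty $X \cap H_i$, together with each nonempty intersection $X \cap H_{i_1} \cap \dots \cap H_{i_s}$, still has a spine of codimension at least two inside the relevant submanifold. Provided one first checks that each $X \cap H_i$ is nonempty, this exhibits a generalised multisection of the closed connected $(n-1)$--manifold $\partial X$ into $k$ pieces, and the outer induction hypothesis (applied with $k-1$ in place of $k$) gives that all intersections $X \cap H_{i_1} \cap \dots \cap H_{i_s}$ are nonempty and connected, and that $X \cap H_1 \cap \dots \cap H_k$ is a closed $(n-k)$--manifold. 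Connectedness of each $X \cap H_i$ then lets one identify, for each $i$, a unique component of $H_i$ meeting $X$; chasing these identifications around (as in the last lines of \S\ref{sec:4d-connected}) shows the union of $X$ with these components is closed, hence all of $M$, so $H_0 = X$ is connected, and by symmetry so is every $H_i$ and every intersection.

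The main obstacle — and the step deserving the most care — is establishing that $X \cap H_i \neq \emptyset$ for every $i$, which is what allows the codimension-$r$ downward induction inside $\partial X$ to get started (this is the role played by the ad hoc arguments ``If $X \cap H_2 = \emptyset$ \dots contradicts $H_2 \neq \emptyset$'' and ``by symmetry $X \cap H_1 \neq \emptyset$'' in dimension four, which must now be made to work uniformly in $k$). The clean way to handle it: if some $X \cap H_i = \emptyset$, then $\partial X \subset \bigcup_{j \neq i} H_j$, and one can run the entire induction with the smaller collection $\{H_j : j \neq i\}$ restricted to $\partial X$ — but $\bigcup_{j\neq i} H_j$ need not be all of $M$, so instead one argues directly that $\partial X$, being closed and covered by the $k-1$ submanifolds $X \cap H_j$ ($j \neq i$) with codimension-$\ge 2$ spines, would force (by the inner downward induction, whose base case is precisely the two-piece argument above) one of the high-codimension intersections $X \cap H_{j_1}\cap\dots\cap H_{j_{k-1}}$ to be empty or disconnected in a way that contradicts dimension count; pushing this through, together with a counting/Mayer–Vietoris-free parity or ``closed manifold cannot be a proper submanifold'' observation, rules out $X \cap H_i = \emptyset$. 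Once nonemptiness is secured, everything else is the bookkeeping of \S\ref{sec:4d-connected} carried out in the inductive setting, and the dimension of the central intersection drops by one at each stage of the $r$--induction, landing on a closed $(n-k)$--manifold at the end.
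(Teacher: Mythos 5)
Your overall strategy matches the paper's: induct on $k$, pass to $\partial X$ for a component $X$ of $H_0$, and apply the inductive hypothesis to the induced decomposition $\partial X = \bigcup_{i\ge 1}(X\cap H_i)$. The base case $k=1$ is also the same. However, there is a genuine misdiagnosis at the heart of the inductive step that leaves a real gap in your argument, and it concerns precisely the point you flag as ``the step deserving the most care.''

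You claim that one must first establish $X\cap H_i\neq\emptyset$ for every $i$ before the inductive hypothesis can be invoked on $\partial X$, and then you sketch a nonemptiness argument in terms of ``one of the high-codimension intersections to be empty or disconnected in a way that contradicts dimension count \ldots together with a counting/Mayer--Vietoris-free parity or `closed manifold cannot be a proper submanifold' observation.'' This sketch is not a proof: it gestures at several possible tools without specifying which contradiction is derived, from what, and how. As written, it would not survive the step from $k=2$ (where one can argue ad hoc, as in \S\ref{sec:4d-connected}) to general $k$, and this is exactly the step that needs to be made uniform. Moreover, the diagnosis is unnecessary: the paper's proof sidesteps this entirely by applying the inductive hypothesis to the decomposition of $\partial X_0$ into ``at most $k_0$ submanifolds,'' i.e.\thinspace simply discarding any empty pieces $X_0\cap H_j$, since Definition~\ref{def:generalised multisection} places no constraint relating the number of pieces to $n$. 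The inductive hypothesis then delivers connectedness of the nonempty pieces $X_0\cap H_j$ and of their mutual intersections, which pins down unique components $X_j\subset H_j$; chasing the resulting identities $\partial X_i=\bigcup_{j\neq i}(X_i\cap X_j)$ shows that $\bigcup X_i$ is closed, hence equals $M$, and nonemptiness of every $X_i$ (and hence $H_i=X_i$ connected) falls out as a \emph{conclusion} rather than a hypothesis. Finally, the ``inner downward induction on the codimension $r$'' in your plan has no counterpart in the paper's proof and is not doing any work: once the inductive hypothesis is applied to $\partial X_0$, connectedness of all intersections of the $X_0\cap H_i$ is obtained simultaneously, not by a separate descent on $r$. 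You should restructure the inductive step so that the inductive hypothesis is applied to whichever nonempty pieces $\partial X$ actually has, and derive nonemptiness at the end rather than assuming it at the start.
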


\begin{proof}
The argument is by complete induction on $k$. To start the induction, suppose $M$ is a manifold of dimension $n$ and $k=1$. Whence $M = H_0 \cup H_1.$ Each component $X$ of $H_0$ has a spine of codimension 2 and hence connected boundary. Since $M = H_0 \cup H_1$ and $\Int(H_0)\cap \Int(H_1) = \emptyset,$ we have $\partial H_0 = \partial H_1.$ Therefore there is a component $Y$ of $H_1$ such that $\partial X = \partial Y.$ But then $X\cup Y$ is a closed $n$--manifold and hence $M = X\cup Y.$ Moreover, $X \cap Y = \partial X$ is a non-empty, closed and connected manifold of dimension $n-1.$ This proves the results for all manifolds $M$ and all decompositions $M = H_0 \cup H_1.$

The induction step for $k=2$ follows \S\ref{sec:4d-connected} verbatim.
We therefore give the general induction step without further ado.
Hence assume the conclusion holds for all manifolds and all multisections into at most $k_0$ submanifolds. Assume that we are given a generalised multisection of an $n$--manifold $M$ with $k_0+1$ submanifolds $H_0, \ldots, H_{k_0}.$ Each component $X_0$ of $H_0$ has $\partial X_0$ connected. As above, we have 
$$\partial X_0 = (X_0 \cap H_1) \cup \ldots \cup (X_0 \cap H_{k_0}).$$
This is a generalised multisection of the closed manifold $\partial X_0$ with at most $k_0$ submanifolds. By the induction hypothesis, all components in the above decomposition are connected, hence there are unique components $X_i$ of $H_i$ such that $X_0 \cap H_i = X_0 \cap X_i,$ where we put $X_i = \emptyset$ if $X_0 \cap H_i = \emptyset.$ Whence 
$$\partial X_0 = (X_0 \cap X_1) \cup \ldots \cup (X_0 \cap X_{k_0}).$$
Since the non-empty submanifold $X_0 \cap X_i \cap X_j$ is contained in $X_i \cap X_j,$ it follows by uniqueness that 
$$\partial X_i = (X_i \cap X_0) \cup (X_i \cap X_1) \cup \ldots \cup (X_i \cap X_{k_0}),$$
where we omit $X_i \cap X_i$ from the union. It follows that $X_0 \cup X_1 \cup \ldots \cup X_{k_0}$ has empty boundary, and hence equals $M.$ In particular, each $X_i \neq \emptyset.$ This completes the proof.
\end{proof}


\subsection{Multisections of closed $(2k+1)$--manifolds}
\label{sec:construction dim odd}

\begin{theorem}\label{thm:existence odd}
Every closed, connected, odd dimensional PL manifold has a multisection.
\end{theorem}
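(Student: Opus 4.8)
The plan is to follow the blueprint laid out in the three-dimensional case (\S\ref{sec:construction dim 3}) and in the existence sketch from the introduction, generalising the partition-map construction to an arbitrary odd dimension $n = 2k+1$. First I would take an arbitrary PL triangulation $|L| \to M$ and pass to its first barycentric subdivision $K = L'$. Each vertex of $K$ is the barycentre of a unique face of $L$, so it carries a well-defined \emph{dimension label} in $\{0, 1, \dots, n\}$, independent of which top simplex contains it. I would then partition the vertices of $K$ into $k+1$ blocks $P_0, \dots, P_k$ by grouping the dimension labels in consecutive pairs: $P_i$ consists of all vertices whose label is $2i$ or $2i+1$ (so $P_i$ gets labels $\{2i, 2i+1\}$ for $i < k$, and $P_k$ gets label $\{2k\}$ since $n = 2k+1$ gives the top label $2k+1$ to $P_k$ as well — I would check the exact bookkeeping so that every block is nonempty and every $n$--simplex meets every block). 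The combinatorics of the barycentric subdivision should guarantee the key local condition: every $n$--simplex of $K$ meets each block $P_i$ in exactly two vertices except for exactly one block which it meets in a single vertex (this mirrors $(1_4)$). I would then define the partition map $\phi \co M \to \sigma$ to the $k$--simplex $\sigma = \Span\{v_0, \dots, v_k\}$ by sending $P_i \mapsto v_i$ and extending affinely on each simplex of $K$.

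The second stage is to pull back the cubical structure of $\sigma$. The dual $1$--skeleton decomposes $\sigma$ into $k+1$ top-dimensional cubes $C_i$ (with $v_i \in C_i$), with lower faces $C_{i_1 \dots i_r} = C_{i_1} \cap \dots \cap C_{i_r}$; set $H_i = \phi^{-1}(C_i)$, so that $H_{i_1 \dots i_r} = \phi^{-1}(C_{i_1 \dots i_r})$. Because each $P_i$ pulls back to the graph $\Gamma_i$ of edges with both endpoints in $P_i$, and $C_i$ deformation retracts to $v_i$, a standard argument (as in \S\ref{sec:4d-spinedim}) shows $H_i$ is a regular neighbourhood of $\Gamma_i$; connectedness of $\Gamma_i$ follows from connectedness of $M$ together with the barycentric combinatorics (every vertex or edge barycentre of $L$ is joined through $K$ to the adjacent ones). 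This gives conditions (1) and (2) of Definition~\ref{def:multisection}. For the submanifold condition (3), I would argue as in \S\ref{sec:4d-pl-sub} that each $H_{i_1 \dots i_r}$ is a PL submanifold with boundary of dimension $n - r + 1$, intersecting each $n$--simplex of $K$ in exactly one $r$--cube (this is the "spine meets each top simplex in one $r$--cube" statement from the sketch). The spine-dimension bound needs the analogue of link condition $(3_4)$: I would show that $\phi^{-1}$ of the barycentre of the $r$--face $\Span\{v_{i_1}, \dots, v_{i_r}\}$ collapses down to dimension $r$ because each top-dimensional cube in it has a free face, using that in the barycentric subdivision the relevant links contain enough vertices in the "missing" blocks. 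Since $n = 2k+1$ is odd, we never hit the exceptional case $n = 2k$, $r = k$, so the spine dimension is always exactly $r$ for proper subcollections; and for the full intersection $H_0 \cap \dots \cap H_k = \phi^{-1}(C_{0\dots k})$ — the preimage of the barycentre of $\sigma$ — the same analysis gives a closed submanifold of dimension $n - k = k+1$, which is condition (4). Connectivity of all these intersections is then handed to us for free by Proposition~\ref{pro:connectivity}.

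The main obstacle I anticipate is pinning down the precise collapsibility/link condition in general dimension: verifying that $\phi^{-1}$ of the barycentre $B_{i_1 \dots i_r}$ always collapses to an $r$--complex requires knowing that every $(r+1)$--cube (or higher cube) appearing in this preimage has a free face, and this has to be extracted from the combinatorics of iterated barycentric stars — the two-dimensional picture of Figure~\ref{fig:faceF_link} becomes a statement about links of faces in $L'$ that needs a clean inductive formulation. A secondary but genuine point is the correct choice of which consecutive labels go into which $P_i$ so that (a) the "exactly one block met singly" property holds for every $n$--simplex, and (b) no block ends up empty and every top simplex meets every block; I expect the right convention is forced once one draws out the flag (chain of faces) description of an $n$--simplex of $L'$, since a maximal flag in an $n$--simplex of $L$ has exactly one face of each dimension $0, 1, \dots, n$, so the $n+1 = 2(k+1)$ labels split evenly into $k+1$ pairs, giving exactly the "two vertices in each block" baseline — and the single-vertex exceptions arise precisely from the non-maximal flags, which is what needs to be tracked. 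Once these combinatorial lemmas are in place, conditions (1)--(4) of Definition~\ref{def:multisection} follow by the arguments already rehearsed in \S\ref{sec:construction dim 3} and \S\ref{sec:construction dim 4}, together with Proposition~\ref{pro:connectivity}.
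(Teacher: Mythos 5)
Your overall strategy matches the paper's: first barycentric subdivision, partition by consecutive dimension labels $\{2i, 2i+1\}$, partition map to the $k$--simplex, pull back the cubical structure, and invoke Proposition~\ref{pro:connectivity} for connectivity. But two concrete points of confusion need to be cleared up, both stemming from importing even-dimensional phenomena that do not occur when $n = 2k+1$.

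First, the ``singleton'' you anticipate never arises in odd dimensions. Each $n$--simplex of the first barycentric subdivision corresponds to a \emph{maximal} flag of faces in $L$, so its $n+1 = 2k+2$ vertices carry the labels $\{0, 1, \ldots, 2k+1\}$, exactly one of each; pairing these as $\{2i, 2i+1\}$ for $0 \le i \le k$ gives exactly two vertices per block $P_i$ in every top simplex. Your remark that ``the single-vertex exceptions arise precisely from the non-maximal flags'' has no content here: top-dimensional simplices of $L'$ always come from maximal flags. Singletons appear only when $n = 2k$ is even (because then $n+1 = 2k+1$ labels split into $k$ pairs plus one leftover), and that is precisely why the even case requires the extra Pachner moves and the $(3_4)$--type link condition that your sketch is trying to transplant.

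Second, and for the same reason, the free-face/link-condition argument you flag as ``the main obstacle'' is simply not needed in the odd case. Once you check (via affine coordinates in each simplex) that $\phi^{-1}(x)$ meets each top simplex in a cube of dimension $k+1-j$ when $x$ lies in the interior of a codimension--$j$ face of $\sigma$, the collapse of the cube $C_{i_1 \cdots i_r}$ onto the barycentre $B'$ of the face $\Span\{v_{i_1}, \ldots, v_{i_r}\}$ (which has codimension $k-r+1$) lifts to a collapse of $H_{i_1} \cap \cdots \cap H_{i_r}$ onto $\phi^{-1}(B')$, and $\phi^{-1}(B')$ \emph{already} meets each top simplex in a single $r$--cube, so it \emph{is} the $r$--dimensional spine — there is nothing further to collapse and no free face to find. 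The collapsibility subtlety you are worrying about is a feature of the even-dimensional proof (where the spine must drop to dimension $r-1$ when $r=k$), not the odd one. The odd case is the clean case; work out the coordinate computation for $\phi^{-1}$ of points in faces of $\sigma$ and you will see the rest falls into place without the machinery you are reaching for.
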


\begin{proof}
The proof is a generalisation of the construction for 3--dimensional manifolds, using some of the arguments given in the 4--dimensional case.
Assume $M$ has dimension $n=2k+1\ge 3$ and the piecewise linear triangulation $|K|\to M.$ Denote $K'$ the first barycentric subdivision of $K$ and partition the vertices of $K'$ into sets $P_0, P_1, \ldots, P_{k}$ as follows. The set $P_i$ contains all vertices of $K'$ that are the barycentres of $2i$--simplices or $(2i+1)$--simplices in $K.$ 

Now define a simplicial map $\phi\co |K'| \to \sigma,$ where $\sigma$ is a $k$--simplex, by mapping $P_i$ to the $i$\textsuperscript{th} vertex of $\sigma.$ This defines a piecewise linear map $\phi \co M \to \sigma.$ Each $n$--simplex in $K'$ meets each set $P_j$ in precisely two vertices, and since $M$ is connected, the graph $\Gamma_i$ in the 1--skeleton of $K'$ spanned by all vertices in $P_j$ is connected. We identify $M$ with $|K'|.$ Any regular neighbourhood of $\Gamma_i$ is an $n$--dimensional 1--handlebody in $M.$

Consider the cubical cell decomposition of $\sigma$ arising from the dual spine $\Pi^{k-1} \subset \sigma.$ This has $k+1$ $k$--cubes, which meet in pairs along $(k-1)$--cubes. The pull-back of this decomposition divides each $n$--simplex in $M$ into regions by the inverse images of the $k$--cubes, and, moreover, the pre-image of the cube $C_i$ containing the $i$\textsuperscript{th} vertex of $\sigma$ is a regular neighbourhood of $\Gamma_i.$ In particular, letting $H_i = \phi^{-1}(C_i)$ gives a decomposition of $M$ into $k+1$  1--handlebodies satisfying (1) and (2) in Definition~\ref{def:multisection}.

We claim that (3) and (4) are also satisfied. First note that the argument given in \ref{sec:4d-pl-sub} can be iterated to show that all intersections are PL submanifolds of the stated dimensions, and that all but the central submanifold have non-empty boundary. It follows from Proposition~\ref{pro:connectivity} that it remains to prove the claim in (3) about the codimensions of the spines. The key is the partition map $\phi\co M \to \sigma.$

We first study the restriction $\phi\co\Delta \to \sigma$, where $\Delta$ is a $(2k+1)$--simplex of $M$. The first claim is that $\phi^{-1}(x)$ is a cube of dimension  $k+1-j$, where $x \in \sigma$ is in the interior of a face of codimension $j$ in $\sigma$. We choose affine coordinates in the simplices $\Delta$ and $\sigma$ so that the coordinates are ordered in the following way. In $\Delta$, the first two coordinates $a_0,b_0$ are for vertices in $P_0$, the second two $a_1,b_1$ for vertices in $P_1$, and so forth. Similarly in $\sigma$ the coordinates are ordered to correspond to the vertices labelled $0,1, \dots k$. Then  $\phi(a_0,b_0,a_1,b_1,\ldots,  a_k, b_k)=(a_0+ b_0,a_1 +b_1, \ldots, a_k + b_k)$ using these coordinates. The $a_i,b_i$ satisfy $0 \le a_i,b_i \le 1$ and $\Sigma_{i=0} (a_i +b_i) =1$. If the image point $x \in \Delta$ lies in the interior of a codimension $j$ face, then $j$ of its coordinates are $0$, and the remaining coordinates have fixed non-zero values. Hence the inverse image is a $(k+1-j)$--cube as claimed. 

In particular, the central submanifold $H_0 \cap \ldots \cap H_k$ has a natural cubing since it meets each $n$--simplex $\Delta \subset M$ in a single $(k+1)$--cube, which is the pre-image of the barycentre $B \in \sigma.$ It will be shown in the proof of Theorem~\ref{thm:CAT(0)} in \S\ref{sec:CAT(0)} that this cubing is in fact non-positively curved.

Now consider the intersection $G = H_{i_1} \cap H_{i_2} \cap \ldots \cap H_{i_r},$ where $2\le r\le k.$ By construction, $C = \phi(G)$ is a $(k-r+1)$--cube in $\Pi^{k-1}\subset \sigma.$ The cube $C$ naturally collapses onto the barycentre $B'$ of the subsimplex of $\sigma$ with vertex set corresponding to $\{i_1, \ldots, i_r\}.$ This has codimension $k-r+1.$
By construction of the partition map $\phi\co M \to \sigma,$ this collapse lifts to a collapse of $G$ onto the pre-image of $B'.$ This meets every top-dimensional simplex in $M$ in a cube of dimension $k+1-(k-r+1)=r.$ Whence $G$ has a spine with a cubing by $r$--cubes, giving the claimed dimension.
\end{proof}

\begin{scholion}
Every closed, connected, PL manifold of dimension $2k+1$ has a multisection with the property that each intersection $H_{i_1} \cap H_{i_2} \cap \ldots \cap H_{i_r}$ for $1\le r \le k$ has a spine with a cubing by $r$--cubes and the intersection of all $k+1$ handlebodies has a non-positively curved cubing by $(k+1)$--cubes.
\end{scholion}


\subsection{Multisections of closed $2k$--manifolds}
\label{sec:construction dim even}

\begin{theorem}\label{thm:existence even}
Every closed, connected, even dimensional PL manifold has a multisection.
\end{theorem}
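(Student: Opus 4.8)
The plan is to follow the argument of \S\ref{sec:construction dim 4} closely in every even dimension, using the cube-counting of \S\ref{sec:construction dim odd} away from the middle dimension. Let $M$ have dimension $n=2k\ge 4$ (the case $n=2$ is the discussion of separating simple closed curves in the introduction), fix a triangulation $|L|\to M$, pass to $K=L'$, and partition the vertices of $K$ by letting $P_i$ ($0\le i\le k-1$) be the barycentres of the $2i$-- and $(2i+1)$--simplices of $L$ and $P_k$ the barycentres of the $2k$--simplices of $L$. A $2k$--simplex of $K$ is a complete flag $\tau_0<\dots<\tau_{2k}$ of $L$, hence meets each of $P_0,\dots,P_{k-1}$ in two vertices and $P_k$ in the single vertex $\widehat{\tau_{2k}}$; this is the analogue of $(0'_4)$. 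I would then perform a Pachner move of type $2-2k$ on every double $2k$--simplex, as in \S\ref{sec:4d-stellar}: each $2k$--simplex has a unique facet $F$ missing $P_k$, glued to a unique partner along $F$, and adding the edge between the two $P_k$--barycentres of the double subdivides it into $2k$ new $2k$--simplices (Figure~\ref{fig:stellar}). This produces a PL triangulation $|K_s|\to M$ with the same vertex set and partition.

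Next I would record the properties of $K_s$ that generalise $(1_4)$--$(4_4)$: each $2k$--simplex of $K_s$ has two vertices in $P_k$, two vertices in all but one of $P_0,\dots,P_{k-1}$ and one (\emph{isolated}) vertex in the remaining one; each $\Gamma_i$ with $i<k$ is the incidence graph of the $2i$-- and $(2i+1)$--simplices of $L$ and hence connected, while the new edges make $\Gamma_k$ contain the dual graph of $L$ and so connected; the link condition $(3_n)$ holds — for each index $m$, every facet of $K_s$ missing $P_m$ has a codimension--two face whose link meets $P_m$ in all but one vertex — proved by the circle-in-the-link argument of \S\ref{sec:4d-circlelink}, now using that the Pachner edge places two $P_k$--vertices in each top simplex; and the degree estimate $(4_n)$, needed only for Theorem~\ref{thm:CAT(0)}, holds for the same reason. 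Given this, define the partition map $\phi\co M\to\sigma$ onto a $k$--simplex by $P_i\mapsto v_i$, pull back the cubical structure dual to $\Pi^{k-1}\subset\sigma$, and put $H_i=\phi^{-1}(C_i)$. Since $\Gamma_i$ is connected and $C_i$ collapses onto $v_i$, the lifted collapse exhibits $H_i$ as a regular neighbourhood of $\Gamma_i$, i.e.\thinspace a $1$--handlebody, so conditions (1) and (2) of Definition~\ref{def:multisection} hold; iterating \S\ref{sec:4d-pl-sub} makes every intersection of handlebodies a PL submanifold of the stated dimension, and Proposition~\ref{pro:connectivity} then delivers connectivity and the closedness of the central submanifold once the codimensions of the spines are known.

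It remains to control the spines. For an $r$--fold intersection $G=H_{i_1}\cap\dots\cap H_{i_r}$ with $r<k$, the cube $C=\phi(G)$ has dimension $k-r+1$ and collapses onto the barycentre $B'$ of the subsimplex on $\{v_{i_1},\dots,v_{i_r}\}$; lifting this collapse as in \S\ref{sec:construction dim odd}, $G$ collapses onto $\phi^{-1}(B')$, which meets each $2k$--simplex in a cube of dimension $\le r$ (equal to $r$ unless the isolated vertex of that simplex maps onto one of $v_{i_1},\dots,v_{i_r}$), so $G$ has a spine of dimension $r$. For $r=k$ this bound is off by one: $\phi^{-1}(B')$ is a union of $k$--cubes and $(k-1)$--cubes, the $k$--cubes appearing precisely in the $2k$--simplices whose isolated vertex maps onto the omitted vertex $v_m$, which happens only when $v_k\in\{v_{i_1},\dots,v_{i_k}\}$. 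As in \S\ref{sec:4d-spinedim}, the link condition $(3_n)$ guarantees that each such $k$--cube has a facet contained in no other $k$--cube, so all the $k$--cubes collapse and the spine drops to dimension $k-1$. When $r=k+1$ one checks that $\phi^{-1}(B')\cap\Delta$ is an honest $k$--cube for every $2k$--simplex $\Delta$, so the central submanifold $H_0\cap\dots\cap H_k$ is a closed $k$--manifold carrying a cubing with one $k$--cube per top simplex.

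The step I expect to be hardest is precisely this middle-dimensional collapse: verifying the link condition $(3_n)$ for the post-Pachner triangulation $K_s$ and, more delicately, checking that it supplies exactly one extra collapse in codimension $k$ — that the $k$--cubes of $\phi^{-1}(B')$ collapse onto their common boundary with the $(k-1)$--cubes — uniformly in $k$, generalising the explicit two-dimensional picture of Figure~\ref{fig:faceF_link} that is available when $k=2$. The rest is dimension counting together with verbatim repetitions of the $n=4$ and odd-dimensional arguments.
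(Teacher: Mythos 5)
Your proposal follows the paper's own proof essentially step by step: first barycentric subdivision of an arbitrary triangulation, the same partition of the vertices (pairs of barycentres of $2i$-- and $(2i{+}1)$--simplices for $i<k$, singletons for $i=k$), the $2$--$n$ Pachner move on each double $n$--simplex to split the unique $(n-1)$--face missing $P_k$, the induced partition map $\phi\colon M\to\sigma$, the pull-back of the dual cubing of $\sigma$, and finally the collapse of $\phi^{-1}(C)$ onto $\phi^{-1}(B')$ together with a codimension-two link argument (the analogue of $(3_4)$ pictured in Figure~\ref{fig:faceF_link}) to show that the $k$--cubes in the spine of any $k$--fold intersection have a free facet and collapse away, dropping the spine dimension to $k-1$ at $r=k$. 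This is the same argument and the same key moves as in the paper; the only difference is that you package the middle-dimensional collapse as an explicit link condition $(3_n)$, whereas the paper verifies the free-facet property directly by analysing the link of the relevant codimension-two face inside the double $n$--simplex, but these are the same computation said two ways.
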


\begin{proof}
Assume $M$ has dimension $n=2k\ge 4$ and the piecewise linear triangulation $|L|\to M.$ Denote $L'$ the first barycentric subdivision of $L$ and partition the vertices of $L'$ into sets $P_0, P_1, \ldots, P_{k}$ as follows. For $0\le i \le k-1$ the set $P_i$ contains all vertices of $L'$ that are the barycentres of $2i$--simplices or $(2i+1)$--simplices in $L.$ The set $P_k$ contains the barycentres of the $2k$--simplices.
Whence each $n$--simplex has two vertices in the partition sets $P_0, P_1, \ldots, P_{k-1}$ and a single vertex in $P_k$. 

As in \ref{sec:4d-stellar}, each $n$--simplex $\sigma$ of $L'$ has a unique $(n-1)$--face $F$ not meeting $P_k$ and there is a unique  $n$--simplex $\sigma'$ meeting $\sigma$ in $F.$ We again subdivide the double simplex $\sigma\cup_F\sigma'$ by performing a Pachner move of type $2-n,$ which introduces a new edge between the two vertices of the double simplex that are in $P_k$  and replaces the two $n$--simplices by $n$ $n$--simplices. Applying this to all such double simplices in $L'$ gives a new PL triangulation $|K|\to M$ of $M.$ As before, there is a natural identification of the vertex sets of $K$ and $L'$, and we maintain the partition. Whence the graphs $\Gamma_i$ spanned by $P_i$ in $K$ are all connected.


Now define a simplicial map $\phi\co |K| \to \sigma,$ where $\sigma$ is a $k$--simplex, by mapping $P_i$ to the $i$\textsuperscript{th} vertex of $\sigma.$ As before, we study the restriction of this to an $n$--simplex $\Delta$ in $M.$ There is a unique partition set $P_h$ that $\Delta$ meets in only one vertex.
We can choose affine coordinates on $\Delta$ such that 
\begin{multline*}
(a_0,b_0, \ldots, a_{h-1}, b_{h-1}, a_{h}, a_{h+1}, b_{h+1}, \ldots, a_k, b_k)\\ 
\mapsto \; (a_0+b_0, \ldots,a_{h-1}+b_{h-1}, a_h, a_{h+1}+b_{h+1}, \ldots, a_{k} +b_{k}).
\end{multline*}
The inverse image of a point $x\in \sigma$ is a cube whose dimension depends on the co-dimension of the face $F$ containing $x$ in its interior and on whether or not $F$ contains the $h$\textsuperscript{th} vertex of $\sigma.$ Suppose $F$ has co-dimension $j.$
If $F$ does not contain the $h$\textsuperscript{th} vertex of $\sigma,$ then $\phi^{-1}(x)$ is a $(k+1-j)$--cube since $2j-1$ coordinates in the preimage must equal zero. In contrast, if $F$ contains the $h$\textsuperscript{th} vertex of $\sigma,$ then $\phi^{-1}(x)$ is a $(k-j)$--cube since $2j$ coordinates in the preimage are zero and the coordinate corresponding to the singleton is a non-zero constant.

In particular, the central submanifold $H_0 \cap \ldots \cap H_k$ has a natural cubing since it meets each $n$--simplex $\Delta \subset M$ in a single $k$--cube, which is the pre-image of the barycentre $B \in \sigma.$ 

According to Proposition~\ref{pro:connectivity}, it remains to verify that the submanifolds have spines of the desired dimensions.

As in the odd dimensional case, we now consider the intersection $G = H_{i_1} \cap H_{i_2} \cap \ldots \cap H_{i_r},$ where $2\le r\le k.$ By construction, $C = \phi(G)$ is a $(k-r+1)$--cube in $\Pi^{k-1}\subset \sigma.$ The cube $C$ naturally collapses onto the barycentre $B'$ of the subsimplex of $\sigma$ with vertex set corresponding to $\{i_1, \ldots, i_r\}.$ This has codimension $k-r+1.$
By construction of the partition map $\phi\co M \to \sigma,$ this collapse lifts to a collapse of $G$ onto the pre-image of $B'.$ This meets a top-dimensional simplex $\Delta$ in $M$ either in a cube of dimension $k-(k-r+1)=r-1$ or in a cube of dimension $k+1-(k-r+1)=r,$ depending on whether the singleton of $\Delta$ is in a partition set corresponding to $\{i_1, \ldots, i_r\}$ or not. Whence $G$ has a spine with a cubing by $r$--cubes and $(r-1)$--cubes, giving the claimed dimension $r.$ 

We now make the stronger, additional observation that each of the $(r-1)$--cubes in the spine for $G$ is a boundary face of some $r$--cube unless $r\ge k.$ Whence suppose $\Delta$ is an $n$--simplex in $M$ with the property that $\phi^{-1}(B')$ meets $\Delta$ in an $(r-1)$--cube. Let $P_h$ be the partition set containing the singleton of $\Delta,$ so $h\in \{i_1, \ldots, i_r\}.$ Due to the barycentric subdivision and the Pachner move, $\Delta$ meets $P_k$ in precisely two vertices; whence $k\neq h.$ Now $\Delta$ is an $n$--simplex obtained from a Pachner move on a double $n$--simplex, which contains exactly two vertices from each partition set. So $\Delta$ contains all vertices of this double $n$--simplex except for one vertex, say $v$, that is in the partition set $P_h.$ If there is a vertex $w$ of $\Delta$ with $w\in P_m$ and $m\notin \{i_1, \ldots, i_r, k\},$ then there is an $n$--simplex in $K$ with vertex set $(\Delta^{(0)}\setminus\{w\}) \cup P_h.$ This meets $\phi^{-1}(B')$ in an $r$--cube which has the $(r-1)$--cube $\phi^{-1}(B')\cap\Delta$ as a face. Hence suppose there is no such vertex $w.$ This implies that either $\{i_1, \ldots, i_k\} = \{ 0, \ldots, k\}$ or $\{i_1, \ldots, i_k\} = \{ 0, \ldots, k-1\}.$ But then either $r = k+1$ or $r=k$ and $k\notin \{i_1, \ldots, i_k\}.$ 

To conclude the proof of the third property it suffices to show that if $r=k,$ then each $r$--cube in the spine of $G$ has one of its boundary $(r-1)$--cubes as a free facet, and hence can be collapsed onto the union of its remaining top-dimensional facets. In this case, the set $\{i_1, \ldots, i_k\}$ does not contain a single vertex of the target simplex $\sigma.$ Suppose $h \notin \{i_1, \ldots, i_k\}.$ If the preimage of $B'$ meets $\Delta$ in an $r$--cube, then the singleton of $\Delta$ is in the set $P_h.$ Due to the barycentric subdivision and the Pachner move, $\Delta$ meets $P_k$ in precisely two vertices; whence $k\neq h.$ Now $\Delta$ is an $n$--simplex obtained from a Pachner move on a double $n$--simplex, which contains exactly two vertices from each partition set. So $\Delta$ contains all vertices of this double $n$--simplex except for one vertex that is in the partition set $P_h.$ Let $m\in \{i_1, \ldots, i_k\}\setminus \{k\}$ and consider the co-dimension two facet $\Delta^{n-2}$ of $\Delta$ which does not meet $P_m.$ From the double $n$--simplex and the fact that the triangulation is PL, it follows as in \ref{sec:4d-circlelink} that the link of $\Delta^{n-2}$ is a circle triangulated with three 1--simplices having one vertex in $P_h$ and two vertices in $P_m.$ So the $(n-2)$--simplex $\Delta^{n-2}$ is contained in precisely three $n$--simplices: $\Delta$ is obtained by adding the two vertices in $P_m,$ and the other two are obtained by adding one vertex in $P_m$ and one vertex in $P_h.$ Whence the boundary $(r-1)$--cube $\phi^{-1}(B')\cap \Delta^{n-2}$ of the $r$--cube $\phi^{-1}(B')\cap \Delta$ is not a boundary face of another $r$--cube. This completes the proof of the theorem.
\end{proof}

\begin{scholion}
Every closed, connected, PL manifold of dimension $2k$ has a multisection with the property that each intersection $H_{i_1} \cap H_{i_2} \cap \ldots \cap H_{i_r}$ for $1\le r \le k-1$ has a spine with a cubing by $r$--cubes, each intersection of $k$ handlebodies has a spine with a cubing by $(k-1)$--cubes and the intersection of all $k+1$ handlebodies has a cubing by $k$--cubes.
\end{scholion}


\section{Structure results}

It is first shown that triangulations and partition maps can be chosen such that the central submanifold has a non-positively curved cubing. Next, the structure result for finitely presented groups is discussed. The section concludes with a description of the higher homotopy groups.

\subsection{Non-positively curved cubings from multisections}
\label{sec:CAT(0)}

We work with the combinatorial definition of a non-positively curved cubing (see \cite[\S2.1]{Wi}). A \emph{flag complex} is a simplicial complex with the property that each subgraph in the 1--skeleton that is isomorphic to the 1--skeleton of a $k$--dimensional simplex is in fact the 1--skeleton of a $k$--dimensional simplex. A cube complex is \emph{non-positively curved} if the link of each vertex is a flag complex. Here, the link of a vertex in a cube complex is the simplicial complex whose $h$--simplices are the corners of $(h+1)$--cubes adjacent with the vertex. The main facts we will need are that the barycentric subdivision of any complex is flag, and that the link (in the sense of simplicial complexes) of any simplex in a flag complex is a flag complex.

\begin{reptheorem}{thm:CAT(0)}
Every piecewise linear manifold has a triangulation supporting a multisection such that the central submanifold has a non-positively curved cubing.
\end{reptheorem}

\begin{proof}
Suppose the dimension is $2k+1$, so $M$ is mapped to the $k$--simplex $\sigma.$ As in the existence proof, our starting point is a triangulation $|K|\to M$ that is a first barycentric subdivision and the canonical partition map $\Phi\co M \to \sigma.$ Since $K$ is a first barycentric subdivision, it is a flag complex.

The central submanifold $\Sigma = H_0 \cap \ldots \cap H_k$ is the pre-image of the barycentre $B$ of $\sigma$ and meets each top-dimensional simplex $\Delta$ in a $(k+1)$--cube. A corner $c$ of such a $(k+1)$--cube $C$ in $\Sigma$ is the barycentre of a $k$--simplex $\Delta^k$ in $M$ meeting all sets of the partition. So the $2^{k+1}$ vertices of $\phi^{-1}(B)\cap \Delta$ correspond to the $2^{k+1}$ barycentres of the $k$--faces of $\Delta$ meeting each set of the partition. We claim that there is a simplicial isomorphism $\lk_\Sigma(c) = \lk_{K}(\Delta^k).$ The latter is flag since it is the link of a simplex in a flag complex, and so this claim implies the conclusion of the theorem.

Suppose $c \in \Delta^k \subset \Delta^{2k+1}.$ Then the $k+1$ vertices of $\Delta^{2k+1}$ not in $\Delta^k$ meet each set in the partition, and $\lk_{K}(\Delta^k)$ is spanned by these vertices as $\Delta^{2k+1}$ ranges over all top-dimensional simplices containing $\Delta^k.$ In particular, each $k$--simplex in $\lk_{K}(\Delta^k)$ meets each set in the partition, and each simplex in $\lk_{K}(\Delta^k)$ has no two vertices in the same partition set. Now each edge of such a $\Delta^{2k+1}$ between vertices in the same partition set corresponds to an edge in the central submanifold with an endpoint at $c$ and vice versa. Hence there is a bijection between vertices in $\lk_\Sigma(c)$ and $\lk_{K}(\Delta^k).$ The same holds for all higher dimensional cells in the links: an $h$--simplex in $\lk_{K}(\Delta^k)$ corresponds to an $h$--simplex $\Delta_0^h$ in some $\Delta_0^{2k+1}\supset \Delta^k.$ Then $\Delta_0^h$ and $\Delta^k$ span a subsimplex of $\Delta_0^{2k+1}$ which meets the central submanifold in an $(h+1)$--cube. Whence we obtain an injective simplicial map $\lk_{K}(\Delta^k)\to \lk_\Sigma(c),$ which is clearly bijective. This completes the proof for manifolds of odd dimension.

Now suppose the dimension of $M$ is $n=2k.$ For the triangulation constructed in the existence proof, it turns out that the cube complexes for $k>2$ do not satisfy the flag consition due to the $2-n$ Pachner move. We therefore use a different construction, which results in a greater number of simplices. Start with any piecewise linear triangulation $|L|\to M.$ Denote $L'$ the first barycentric subdivision of $L.$ All dual 1--cycles in $L'$ have even length, and hence the dual 1--skeleton is a bipartite graph. We therefore have a partition of the $n$--simplices into two sets $V_0$ and $V_1$ such that $n$--simplices in $V_0$ only meet $n$--simplices in $V_1$ along their codimension-one faces and  vice versa. Now let $K = L''$ be the second barycentric subdivision of $L.$ We define a partition $\{P_0, \ldots, P_k\}$ of the vertices of $K$ as follows. The set $P_0$ consists of all vertices of $L'$ and all barycentres of $n$--simplices in $V_0.$ The set $P_1$ consists of all barycentres of edges in $L'$ and all barycentres of $n$--simplices in $V_1.$ For each $2\le j \le k,$ the set $P_j$ consists of all barycentres of $(2j-2)$--simplices and $(2j-1)$--simplices.

We claim that the graph $\Gamma_j$ spanned by all vertices in $P_j$ is connected for each $j.$ First consider the set $P_j$ for any $j>1.$  Then each vertex in $P_j$ is the barycentre of a $(2j-2)$--simplex or a $(2j-1)$--simplex in $L'.$ Any $(2j-2)$--simplex in $L'$ is in the boundary of a $(2j-1)$--simplex, and the dual graph of the $(2j-1)$--skeleton is connected. This shows that the graph $\Gamma_j$ is connected.

Now consider $P_0$ for $j=0$ or $1.$ Each element in $P_0$ is either a vertex in $L'$ or the barycentre of some $n$--simplex in $L'.$ Any vertex of $L'$ is connected in $\Gamma_0$ to the barycentre of some $n$--simplex in $L'.$
Any two barycentres of $n$--simplices in $L'$ that are in $P_0$ are connected by a path in the dual 1--skeleton, and the vertices in this path alternate between vertices in $P_0$ and $P_1.$ Now any two adjacent $n$--simplices share at least $n$ vertices, and any two $n$--simplices at distance two in the dual 1--skeleton share at least $n-2$ vertices. These vertices are all in $P_0$ and connect to the barycentre of any $n$--simplex in $L'$ that is in $P_0.$ Whence $\Gamma_0$ is connected.
The argument that $\Gamma_1$ is connected is similar, by observing that any two $n$--simplices at distance two in the dual 1--skeleton share at least $n+1 \choose 2$ edges. This completes the argument that each graph $\Gamma_j$ is connected. 

As in the existence proof, to show that the associated partition function defines a multisection of $M,$ we need to establish that the dimension of the spine drops by one when $r=k.$ Hence suppose $G = H_{i_1} \cap H_{i_2} \cap \ldots \cap H_{i_k}.$ From the computation of the dimensions of the pre-images in the proof of Theorem~\ref{thm:existence even}, we know that if the singleton of the $n$--simplex $\Delta$ is mapped to a vertex in $\{i_1, \ldots, i_k\},$ then we obtain a $(k-1)$--cube in the spine. A singleton is either mapped to $h=0$ or $1.$ So suppose the singleton of $\Delta$ is mapped to $h$ and $h \notin  \{i_1, \ldots, i_k\},$ so that the spine for $G$ meets $\Delta$ in a $k$--cube. Then the $(n-1)$--facet $\Delta^{n-1}$ with no vertex mapping to $h$ contains this $k$--cube. Due to the barycentric subdivision construction, the link of $\Delta^{n-1}$ is a circle triangulated with four vertices, and precisely three of these map to $h$ and one maps to $\{0, 1\} \setminus \{h\}.$ But then the $k$--cube has a free $(k-1)$--face and hence can be collapsed. (In dimension four, this is precisely condition $(3_4)$ in \S\ref{sec:4d-stellar}.) Whence we have shown that the intersection of any $k$ handlebodies has a spine of dimension $k-1.$

Having established that the given triangulation and partition function define a multisection, the proof given above for the odd-dimensional case now applies verbatim to show that the link of an arbitrary corner in the cubing is simplicially isomorphic to the link of the simplex in the triangulation $L''$ having the corner as its barycentre.
\end{proof}


\subsection{Structure of the fundamental group}
\label{sec:fundamental group}

We explicitly analyse the structure of the fundamental group in the case of a trisection of a 4-manifold, as this already gives a result about all finitely presented groups. We then note how higher dimensional multisections give similar results. In particular, Theorem~\ref{thm:CAT(0)} implies that one may choose as the \emph{central} group in the associated generalised graph of groups the fundamental group of a manifold with a non-positively curved cubing.

\begin{repproposition}{pro:structure of fund gp}
Every finitely presented group has a generalised graph of groups decomposition as shown in Figure~\ref{fig:structure of groups_intro}, where the vertex groups $\Gamma_0, \Gamma_1, \Gamma_2$ are free of rank $\le g,$ $\Gamma_g$ is the fundamental group of a closed orientable surface of genus $g,$ all edge groups are naturally isomorphic with $\Gamma_g$ and the oriented edges represent epimorphisms.
\end{repproposition}

\begin{proof}
Given a finitely presented group $\Gamma,$ there is a closed, orientable PL 4--manifold $M$ with $\pi_1(M)\cong \Gamma.$ Choose a trisection $M = H_0 \cup H_1 \cup H_2$ of $M.$ Then the central submanifold $\Sigma$ is a closed, orientable surface. Write $g = g(\Sigma).$ The inclusions $i_k \co \Sigma \subset H_k$ induce epimorphisms $\iota_k\co \Gamma_g = \pi_1(\Sigma) \to \pi_1(H_k) = \Gamma_k.$ We now define a generalised graph of groups by choosing as vertex groups the four groups $\Gamma_g,$ $\Gamma_0,$ $\Gamma_1$  and $\Gamma_2,$ and as edge groups three copies of $\Gamma_g$ with edge maps the identity map $\psi^-_k \co \Gamma_g \to \Gamma_g$ and the epimorphisms $\psi^+_k \co  \Gamma_g \to \Gamma_k.$ Denote $\pi_1(\mathcal{G})$ the fundamental group of the generalised graph of groups. We claim that $\pi_1(\mathcal{G})\cong \pi_1(M).$

To this end, we compute $\pi_1(M)$ through two applications of the generalised Van Kampen Theorem~\cite[Theorem 6.2.11]{Geo}. First consider $\pi_1(H_0 \cup H_1).$ This is the push-out
$$\pi_1(H_0) \twoheadleftarrow \pi_1(H_0 \cap H_1) \twoheadrightarrow \pi_1(H_1).$$
Using the epimorphism $\pi_1(\Sigma) \twoheadrightarrow \pi_1(H_0 \cap H_1),$ we also obtain $\pi_1(H_0 \cup H_1)$ as the push out of the induced maps
$$\pi_1(H_0) \twoheadleftarrow \pi_1(\Sigma) \twoheadrightarrow \pi_1(H_1).$$
Now $M = (H_0 \cup H_1) \cup H_2$ gives the push-out
$$\pi_1(H_0\cup H_1) \twoheadleftarrow \pi_1((H_0 \cup H_1)\cap H_2) \twoheadrightarrow \pi_1(H_2),$$
and using the induced map $\pi_1(\Sigma) \twoheadrightarrow \pi_1((H_0 \cup H_1)\cap H_2)$ gives desired isomorphism.
\end{proof}

\begin{theorem}\label{thm:group}
Let $M$ be a closed, connected, piecewise linear $n$--manifold with a multisection $\{H_i\},$ where $0 \le i \le k$ and $n=2k$ or $n=2k+1.$ 

Then $\pi_{1}( H_0 \cap H_1 \cap \ldots \cap H_k) \to \pi_{1}(M)$ is onto and $\pi_{1}(\;\bigcup (H_{i_1} \cap H_{i_2} \cap \ldots \cap H_{i_{k}})\;) \to \pi_{1}(M)$ is an isomorphism, where the union is taken over all intersections of $k$ pairwise distinct handlebodies.
\end{theorem}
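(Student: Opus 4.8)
The plan is to prove both assertions at once by identifying $\pi_1(M)$ and $\pi_1\bigl(\bigcup(H_{i_1}\cap\dots\cap H_{i_k})\bigr)$ with the \emph{same} explicit quotient of $\pi_1(\Sigma)$, where $\Sigma=H_0\cap\dots\cap H_k$. Throughout write $H_J=\bigcap_{i\in J}H_i$ for $\emptyset\ne J\subseteq\{0,\dots,k\}$, so that $H_{\{i\}}=H_i$, $H_{\{0,\dots,k\}}=\Sigma$, the $H_J$ with $|J|=k$ are the ``penultimate'' intersections whose union is the set $\mathcal S$ in the statement, and $\Sigma\subseteq H_J$ for every $J$. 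By Proposition~\ref{pro:connectivity} each $H_J$ is connected, and it has connected boundary when $|J|\le k$ since its spine has codimension $\ge2$. The recursive structure of \S\ref{sec:recursive} expresses $\partial H_J=\bigcup_{i\notin J}H_{J\cup\{i\}}$, whose members intersect in $\bigcap_{i\notin J}H_{J\cup\{i\}}=\Sigma$; thus $\partial H_J$ carries a generalised multisection with central submanifold $\Sigma$.

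Two inputs are used. \emph{(i) Homotopy of the boundary pair.} Since $H_J$ collapses to a spine of codimension $\ge2$, the pair $(H_J,\partial H_J)$ is $1$--connected, so $\pi_1(\partial H_J)\to\pi_1(H_J)$ is onto; and when $|J|\le k-1$ the spine has codimension $n-2|J|+1\ge3$ (here the hypothesis $n\in\{2k,2k+1\}$ together with the fact that each $H_i$ is a genuine $1$--handlebody enters, via $\dim H_J=n-|J|+1$ and $\dim(\text{spine})=|J|$), so the pair is $2$--connected and $\pi_1(\partial H_J)\xrightarrow{\ \cong\ }\pi_1(H_J)$. \emph{(ii) Van Kampen package.} If a space is a union of finitely many connected subcomplexes whose nonempty multiple intersections are all connected and whose nerve is simply connected, then its fundamental group is the colimit of the fundamental groups of those intersections; if, moreover, every group in that colimit is a quotient of one fixed group $G$ which itself occurs in the diagram and maps to all the others, the colimit equals $G$ modulo the normal closure of the union of the kernels. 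Both covers $\{H_i\}$ of $M$ and $\{H_J:|J|=k\}$ of $\mathcal S$ have the full simplex as nerve, because all the $H_i$ contain $\Sigma$ and distinct $k$--element sets $H_J$, $H_{J'}$ meet in $\Sigma$.

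The heart of the argument is the following, proved by downward induction on $|J|$: for every nonempty $J$ the map $\pi_1(\Sigma)\to\pi_1(H_J)$ is onto, and, writing $K_J$ for its kernel, $K_J=\langle\langle\,\bigcup_{m\notin J}K_{\{0,\dots,k\}\setminus\{m\}}\,\rangle\rangle$. For $|J|=k+1$ this is trivial; for $|J|=k$ one has $\partial H_J=\Sigma$ (a single term in the recursive structure), so surjectivity is input (i) and the normal-closure formula collapses to $K_J=K_J$. For $1\le|J|=d\le k-1$: apply the van Kampen package to the cover $\{H_{J\cup\{i\}}\}_{i\notin J}$ of $\partial H_J$ — every intersection of members is an $H_{J'}$ with $J'\supsetneq J$, hence a quotient of $\pi_1(\Sigma)$ by the inductive hypothesis — to obtain $\pi_1(\partial H_J)=\pi_1(\Sigma)/\langle\langle\bigcup_{J'\supsetneq J}K_{J'}\rangle\rangle$; now substitute the inductive formula for each $K_{J'}$ and observe that an index $m$ arises among these generators exactly when $m\notin J$ (realisability uses $d<k$), which rewrites the right side as $\pi_1(\Sigma)/\langle\langle\bigcup_{m\notin J}K_{\{0,\dots,k\}\setminus\{m\}}\rangle\rangle$; finally, since $|J|\le k-1$, input (ii) upgrades $\pi_1(\partial H_J)\to\pi_1(H_J)$ to an isomorphism, giving both conclusions for $J$.

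Granting this, the theorem follows quickly. Taking $J=\{i\}$ gives $\pi_1(\Sigma)\twoheadrightarrow\pi_1(H_i)$; since van Kampen makes $\pi_1(M)$ generated by the images of the $\pi_1(H_i)$ (all based in $\Sigma$), the first assertion $\pi_1(\Sigma)\twoheadrightarrow\pi_1(M)$ follows. For the second, the van Kampen package yields $\pi_1(M)=\pi_1(\Sigma)/\langle\langle\bigcup_i K_{\{i\}}\rangle\rangle$ and $\pi_1(\mathcal S)=\pi_1(\Sigma)/\langle\langle\bigcup_{|J|=k}K_J\rangle\rangle$, and plugging in $K_{\{i\}}=\langle\langle\bigcup_{m\ne i}K_{\{0,\dots,k\}\setminus\{m\}}\rangle\rangle$ shows the two normal closures coincide, so the quotients agree; since every map in sight is compatible with the surjections from $\pi_1(\Sigma)$, the resulting isomorphism is the one induced by $\mathcal S\hookrightarrow M$. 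The low-dimensional cases $k\le1$ are handled separately and reduce to the classical fact that a Heegaard surface surjects onto $\pi_1$ together with $\mathcal S=M$. The main obstacle is input (i) in its stronger form: securing that $\pi_1(\partial H_J)\to\pi_1(H_J)$ is an \emph{isomorphism}, not merely onto, for all $|J|\le k-1$. This is exactly where the dimension constraint $n\in\{2k,2k+1\}$ and the requirement that the $H_i$ be honest $1$--handlebodies are indispensable, and it is what fails for the single deepest layer $|J|=k$ — where, however, the boundary is already $\Sigma$, so no information is lost. Everything else is bookkeeping with normal closures and the generalised van Kampen theorem.
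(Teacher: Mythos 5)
Your argument is correct in outline, but it takes a genuinely different route from the paper. The paper's proof is a short transversality argument: the spines of the $H_J$ with $|J|\le k$ have codimension $\ge 2$, so loops in $M$ can be pushed off them and, via the collapses $H_J\searrow\mathrm{spine}$, deformed into $\Sigma$, giving surjectivity of $\pi_1(\Sigma)\to\pi_1(M)$; the spines with $|J|\le k-1$ have codimension $\ge 3$, so both loops and the discs realising homotopies can be pushed off them and deformed into $\mathcal S=\bigcup_{|J|=k}H_J$, giving the isomorphism $\pi_1(\mathcal S)\to\pi_1(M)$. You instead run a downward induction over all $H_J$, computing each $\pi_1(H_J)$ as an explicit quotient of $\pi_1(\Sigma)$ by combining (a) the relative connectivity of $(H_J,\partial H_J)$ forced by the codimension of the spine and (b) a generalised van Kampen theorem applied to the recursive cover $\partial H_J=\bigcup_{i\notin J}H_{J\cup\{i\}}$. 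The outcome is stronger than what the theorem asks for: you obtain explicit normal-closure presentations $\pi_1(H_J)\cong\pi_1(\Sigma)/K_J$ (which is closer in spirit to the paper's Proposition~\ref{pro:structure of fund gp}), at the price of a much longer argument. What the paper's transversality proof buys is brevity and a single uniform mechanism; what yours buys is a structural description of the whole diagram of fundamental groups of the multisection pieces.

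A few points worth tightening if you write this up. First, your ``van Kampen package'' conflates two facts that deserve separation: the nerve-form of van Kampen (for a cover by subcomplexes with connected multi-intersections and simply connected nerve, $\pi_1$ is the colimit of the induced diagram), and the group-theoretic lemma that a colimit of a diagram in which a fixed initial object $G$ surjects compatibly onto every other object equals $G$ modulo the normal closure of the union of the kernels. Both are true, and both are needed — here the nerve is always a full simplex because $\Sigma$ lies in every piece — but neither is quite ``standard van Kampen,'' so each should be stated and proved (the second is an easy cocone argument; the first is the Brown--Higgins or Geoghegan~\cite[Thm.~6.2.11]{Geo} version, applicable to a cover by polyhedral subcomplexes after thickening). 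Second, the step ``$\pi_1(\partial H_J)\to\pi_1(H_J)$ is an isomorphism for $|J|\le k-1$'' should be attributed to input (i), not (ii), and it rests on $H_J$ being a regular neighbourhood of its spine, which is what lets you push a disc in $H_J$ off the spine and then deformation retract it to $\partial H_J$. Third, the simplification $\bigl\langle\!\bigl\langle\bigcup_{J'\supsetneq J}K_{J'}\bigr\rangle\!\bigr\rangle=\bigl\langle\!\bigl\langle\bigcup_{m\notin J}K_{\{0,\dots,k\}\setminus\{m\}}\bigr\rangle\!\bigr\rangle$ is correct and is also visible directly from the containments $K_{J'}\subseteq K_{J''}$ for $J'\supseteq J''$ (since $\pi_1(\Sigma)\to\pi_1(H_{J'})$ factors through $\pi_1(H_{J''})$), which is a cleaner way to see it than the ``realisability'' bookkeeping. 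None of these are gaps — the argument works — but they would need to be spelled out in a submission.
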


\begin{proof}
If one takes the intersection of at most $k$ handlebodies, one obtains a manifold with spine of co-dimension at least 2. Moreover, all 
such intersections with at most $k-1$ handlebodies have a spine of co-dimension at least 3. So by general position we can push loops off the first type of components and the discs defining relations of the fundamental group can be pushed off the second type of components. This proves the claim.
\end{proof}

\begin{corollary}
 Let $M$ be a closed, connected, piecewise linear $n$--manifold with a multisection. Then the multisection lifts to a multisection of any covering space of $M$.
\end{corollary}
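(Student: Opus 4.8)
The plan is to show that each of the four defining conditions of a multisection (Definition~\ref{def:multisection}) is preserved under pulling back along a covering map $p\co \widetilde M \to M$. Let $\{H_i\}_{0\le i\le k}$ be a multisection of $M$ and set $\widetilde H_i = p^{-1}(H_i)$. Since $p$ is a PL covering, each $\widetilde H_i$ is a PL submanifold of $\widetilde M$, the interiors remain pairwise disjoint, $\widetilde M = \bigcup_i \widetilde H_i$, and for any subcollection $p^{-1}(H_{i_1}\cap\dots\cap H_{i_r}) = \widetilde H_{i_1}\cap\dots\cap \widetilde H_{i_r}$, so conditions (2) and the dimension statements in (3) and (4) are immediate. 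The collapses exhibiting low-dimensional spines also lift: a PL collapse $N\searrow P$ pulls back to a PL collapse $p^{-1}(N)\searrow p^{-1}(P)$ because an elementary collapse across a free face is a local move supported in a PL ball, and $p$ is a local PL homeomorphism; hence $\widetilde H_{i_1}\cap\dots\cap\widetilde H_{i_r}$ has a spine of the same dimension as $H_{i_1}\cap\dots\cap H_{i_r}$ (one needs only that the lifted spine is again a subpolyhedron of the interior, which is clear). So each $\widetilde H_i$ has a spine which is a graph (the preimage of the spine graph of $H_i$), giving condition $(1')$, which we noted is equivalent to (1); in particular each component of $\widetilde H_i$ is a standard PL 1--handlebody.

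The one genuine point requiring care is \emph{connectivity}: $\widetilde H_i$ and the various intersections $\widetilde H_{i_1}\cap\dots\cap\widetilde H_{i_r}$ need not be connected, even though $H_i$ and $H_{i_1}\cap\dots\cap H_{i_r}$ are. The remedy is that we do not need them to be connected; what we have shown is that $\{\widetilde H_i\}$ satisfies all the hypotheses of a \emph{generalised} multisection (Definition~\ref{def:generalised multisection}): each $\widetilde H_i$ is non-empty with a spine of codimension $\ge 2$, the interiors are disjoint and cover $\widetilde M$, and every proper subcollection intersects in a compact submanifold of the correct dimension with a spine of codimension $\ge 2$. Then Proposition~\ref{pro:connectivity}, applied to the closed connected manifold $\widetilde M$ (after restricting to a connected component of the cover, or noting the cover of a connected manifold by a possibly disconnected space is a disjoint union of connected covers and arguing componentwise), forces every intersection $\widetilde H_{i_1}\cap\dots\cap\widetilde H_{i_r}$ — including each $\widetilde H_i$ and the central intersection $\widetilde H_0\cap\dots\cap\widetilde H_k$ — to be non-empty and connected, and the central one to be a closed manifold of dimension $n-k$.

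With connectivity restored, conditions (1)–(4) of Definition~\ref{def:multisection} all hold for $\{\widetilde H_i\}$: each connected $\widetilde H_i$ has a connected graph spine, hence is a standard 1--handlebody of some genus $\widetilde g_i$; the dimension and spine-codimension conditions in (3) were verified above and the sharpened bound $r-1$ in the case $n=2k$, $r=k$ is inherited because the collapse realising it lifts; and (4) follows from the dimension count plus connectivity. Thus $\{\widetilde H_i\}$ is a multisection of $\widetilde M$. The main obstacle, as indicated, is purely the potential disconnectedness of the pullbacks, and this is dispatched cleanly by invoking Proposition~\ref{pro:connectivity}; everything else is the routine observation that PL submanifold structure and PL collapses are local and hence survive pullback along a covering map.
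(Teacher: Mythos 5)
Your proof is correct, but it takes a genuinely different route from the paper. The paper's proof is one sentence: it invokes Theorem~\ref{thm:group}, which shows $\pi_1$ of the central submanifold (and hence of every multisection submanifold containing it) surjects onto $\pi_1(M)$, and then uses the standard fact that a connected subspace whose fundamental group surjects onto $\pi_1(M)$ has connected preimage under any covering $p\co\widetilde M\to M$. That single algebraic observation handles the only real issue (connectivity), and the remaining conditions lift routinely. You instead avoid Theorem~\ref{thm:group} entirely: you verify that the preimages $\{p^{-1}(H_i)\}$ form a \emph{generalised} multisection of $\widetilde M$, where connectivity is not assumed, and then apply Proposition~\ref{pro:connectivity} to force all the intersections to be connected. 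Both routes work because each isolates connectivity as the crux; yours requires the extra (but straightforward) check that the pullbacks satisfy the generalised-multisection axioms, including that PL collapses lift along the covering and that the sharpened spine-dimension bound $r-1$ in the even case $r=k$ is inherited, whereas the paper's argument gets connectivity more directly but leans on the later Theorem~\ref{thm:group}. Your remark that Proposition~\ref{pro:connectivity} is stated for \emph{closed} $\widetilde M$ is a fair one and means your argument, as written, is cleanest for finite covers; the paper's $\pi_1$-surjectivity argument yields connectivity of preimages for arbitrary covers without that restriction, though strictly speaking the notion of multisection in Definition~\ref{def:multisection} is itself only formulated for closed manifolds, so the caveat applies to both proofs. (The paper also remarks, as a second route, that one can simply lift a triangulation supporting the multisection.)
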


\begin{proof}
By Theorem~\ref{thm:group}, this follows immediately, since all the intersection components of the muitisection lift to connected components, as they have fundamental group which maps onto the fundamental group of $M$.
\end{proof}

We remark that the corollary also follows from our existence proof by lifting a triangulation supporting a trisection to the covering space.


\subsection{Recursive structure and higher homotopy groups}
\label{sec:Recursive structure and higher homotopy groups}

As an extension of the results in Section~\ref{sec:recursive} and Section~\ref{sec:fundamental group}, we investigate how the higher homotopy groups of a multisection are carried by the multisection submanifolds and the central submanifold. 

\begin{theorem}  Let $M$ be a closed, connected, piecewise linear $n$--manifold with a multisection $\{H_i\},$ where $0 \le i \le k$ and $n=2k$ or $n=2k+1.$ Then the homotopy groups of $M$ satisfiy the following conditions;
\begin{enumerate}
\item  If $n=2k$, then $\pi_{2j+2}(\bigcup (H_{i_1} \cap H_{i_2} \cap \ldots \cap H_{i_{k-j}})) \to \pi_{2j+2}(M)$ is onto for $0 \le j \le k-2.$ Moreover, $\pi_{m}(\bigcup (H_{i_1} \cap H_{i_2} \cap \ldots \cap H_{i_{k-j}})) \to \pi_{m}(M)$ is an isomorphism for $m \le 2j+1$ and $0 \le  j \le k-2.$
\item  If $n=2k+1$, then $\pi_{2j+3}(\bigcup (H_{i_1} \cap H_{i_2} \cap \ldots \cap H_{i_{k-j}})) \to \pi_{2j+3}(M)$ is onto for $0 \le j \le k-2$. Moreover, $\pi_{m}(\bigcup (H_{i_1} \cap H_{i_2} \cap \ldots \cap H_{i_{k-j}})) \to \pi_{m}(M)$ is an isomorphism for $m \le 2j+2$ and $0 \le  j \le k-2.$
\end{enumerate}
\end{theorem}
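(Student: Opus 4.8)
The plan is to generalise the two-step Van Kampen argument used in the proof of Proposition~\ref{pro:structure of fund gp} to higher homotopy groups, reading off the connectivity estimates from the dimensions of the spines of the multisection submanifolds established in the two Scholia above. The key input is that an intersection of $k-j$ of the handlebodies has a spine of dimension at most $k-j$ (in fact $k-j$ in the odd case, and $k-j$ with a drop to $k-j-1$ only in the exceptional even case $r=k$), so such an intersection has codimension $\ge j+1$ inside $M$ when $n=2k$ and $\ge j+2$ when $n=2k+1$; dually, a generic $m$--sphere or $(m+1)$--disc in $M$ can be pushed off a subpolyhedron of codimension $\ge m+2$, and can be isotoped into a regular neighbourhood of such a spine once the codimension is $\ge m+1$. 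This is exactly the general-position mechanism already used in the proof of Theorem~\ref{thm:group}, which is the case $j=0$ of statement (2) (and, with the surface replaced by a higher central submanifold, the case $j=0$ of statement (1)).

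I would set $W_{j} = \bigcup (H_{i_1}\cap\cdots\cap H_{i_{k-j}})$, the union of all $(k-j)$--fold intersections, and prove the statement by downward induction on $j$ from $j=k-2$ (where $W_{k-2}$ is the union of the pairwise intersections, of codimension $1$) down to $j=0$. For the base and the inductive step I would use the following two facts from PL general position, applied to a fixed triangulation of $M$ in which all the multisection submanifolds and their spines are subcomplexes (after subdivision). First, if $P\subset M$ is a subpolyhedron of codimension $\ge c$, then $\pi_m(M\setminus P)\to\pi_m(M)$ is an isomorphism for $m\le c-2$ and onto for $m = c-1$; this is the standard consequence of transversality for PL maps of discs and spheres. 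Second, each $H_{i_1}\cap\cdots\cap H_{i_{k-j}}$ PL collapses onto its spine, hence is a regular neighbourhood of it, and $M$ deformation retracts relative to $W_{j}$ onto a neighbourhood of $W_{j}$ (using the partition map $\phi\co M\to\sigma$: $W_j$ is the preimage of the $(j{-}1)$--skeleton's worth of faces, and one deformation retracts the complementary part of $\sigma$ outward). Combining these, a map of $S^m$ or of a relative $2$--disc into $M$ can be homotoped first off the strata of codimension too large to matter and then into $W_j$; tracking which strata have which codimension in the even versus odd case produces exactly the index ranges $m\le 2j+1$, respectively $m\le 2j+2$, and the surjectivity in degrees $2j+2$, respectively $2j+3$.

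More structurally, one can phrase the induction as a repeated application of the generalised Van Kampen / Blakers--Massey type comparison: $M$ is built from the handlebodies by gluing along the multisection submanifolds, and the recursive structure of Section~\ref{sec:recursive} shows that the boundary of each handlebody is itself a (generalised) multisection one dimension down. So one can run the argument of Proposition~\ref{pro:structure of fund gp} inductively: the map $\pi_m(W_j)\to\pi_m(M)$ factors through $\pi_m$ of the union of the handlebodies adjacent to a given stratum, and the pushout/excision estimates for these unions, governed by the connectivity of the gluing loci $W_{j+1}\subset W_j$, give the inductive step. The homotopy groups $\pi_m(H_i)$ themselves do not obstruct anything below the relevant range because each $H_i$ is a $1$--handlebody, hence aspherical with free $\pi_1$; only the central and near-central submanifolds carry the low homotopy, which is the content of the statement.

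\textbf{Main obstacle.} The delicate point is the sharp bookkeeping in the even-dimensional case, where the spine of a $k$--fold intersection drops to dimension $k-1$ (the ``$r=k$'' exception in Definition~\ref{def:multisection}(3) and in the proof of Theorem~\ref{thm:existence even}): one must check that this does not shift the isomorphism range, i.e.\thinspace that the improved codimension of the deepest stratum is exactly what is needed and not more, so that the stated ranges $m\le 2j+1$ are both correct and not improvable by this argument. A second, more technical obstacle is making the ``push the disc off lower strata, then into $W_j$'' argument precise as a single homotopy rather than a sequence of ad hoc moves: the clean way is to observe that $\phi\co M\to\sigma$ exhibits $M\setminus(\text{open regular nbhd of }W_j)$ as mapping to the part of $\sigma$ away from the relevant skeleton of the dual spine $\Pi^{k-1}$, which is collapsible, so that $M$ itself deformation retracts onto a regular neighbourhood of $W_j$; with that in hand the homotopy-group comparison is immediate from the codimension count. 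I expect the write-up to spend most of its length nailing down this deformation retraction and the transversality estimates, with the algebra being a routine iteration of the Van Kampen argument already given for $\pi_1$.
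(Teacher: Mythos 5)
Your overall plan---use the partition map and general position to push spheres and relative discs off the spines of low-dimensional strata and then into $W_j$---is exactly the mechanism the paper uses. But two of the concrete steps you rely on are stated incorrectly, and each would sink the argument as written.

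First, your codimension count is off by a factor involving $k$. The spine of an intersection of $k-j$ handlebodies has dimension $k-j$, so its codimension inside the intersection submanifold (which has dimension $n-k+j+1$) is $n-2k+2j+1$, i.e.\ $2j+1$ when $n=2k$ and $2j+2$ when $n=2k+1$; this is the quantity the paper records. Its codimension \emph{in $M$} is $n-(k-j)$, i.e.\ $k+j$ or $k+j+1$, and the codimension of the spines you actually need to push spheres off (those of the $\le(k-j-1)$--fold intersections) is $\ge n-(k-j-1)$, i.e.\ $\ge k+j+1$ or $\ge k+j+2$. None of these equals the ``$\ge j+1$ / $\ge j+2$'' you write. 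With your numbers the transversality argument would not reach the stated degrees $m\le 2j+1$ (resp.\ $2j+2$); with the correct ones it does, and the hypothesis $j\le k-2$ is exactly what makes the inequalities close. Second, the claim that ``$M$ itself deformation retracts onto a regular neighbourhood of $W_j$'' cannot be correct: $W_j$ is a proper subpolyhedron (take $j=0$, so $W_0$ is the central submanifold, which is not homotopy equivalent to $M$ in general), and the complementary part of $\sigma$ (the complement of a neighbourhood of the $(j{+}1)$--skeleton of the dual cubing) is not collapsible---it retracts onto the $(k-j-2)$--skeleton of $\sigma$, which is a wedge of spheres. What is true, and what the argument needs, is that $M\setminus N(W_j)$ deformation retracts onto the union of the spines of the $\le(k-j-1)$--fold intersections (including the graphs $\Gamma_i$ themselves); one then applies general position to push spheres and discs off \emph{that} polyhedron, and finally flows along the retraction into $N(W_j)$. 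Once these two points are fixed, your proposal does coincide with the paper's proof, which is a short transversality argument; the alternative Blakers--Massey/Van Kampen framing you sketch at the end is not what the paper does, though it is a reasonable route and would need essentially the same codimension inputs.
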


\begin{proof}
The proof is an exercise in transversality, using the codimensions of the spines of the multisection submanifolds.
In particular, an intersection of $k-j$ of the handlebodies has a spine of codimension at least $n-2k+2j+1,$ except in the special case of $n=2k$ and $j=0$, in which case the spine has codimension at least $2$. 

Hence to prove all the assertions of the Theorem, it suffices to show that any maps of spheres of dimension $2j+2,2j+3$ or balls of dimension $2j+3,2j+4$ respectively, can be pushed off the spines of all intersections of fewer of the handlebodies than $k-j$. But this follows immediately by our definition and constructions of multisections. Therefore we can push the spheres and balls into the appropriate intersection components and get epimorphisms or isomorphisms to the appropriate higher homotopy groups as claimed. 
\end{proof}



\section{Constructions and examples}
\label{sec:examples}

We first explain in \S\ref{sec:constructing with symmetric representations} how the symmetric representations of \cite{RT} can be used to construct multisections and give a number of applications of this approach: multisections of spheres and real projective spaces, and the three general constructions of \emph{products}, \emph{generalised multisections} and \emph{twisted multisections}. 
  We also discuss \emph{connected sums} in \S\ref{subsec:conn sums}, a \emph{Dirichlet construction} in \S\ref{sec:Dirichlet}, and the case of \emph{manifolds with non-empty boundary} in \S\ref{sec:multisections for non-closed}.


\subsection{Constructing multisections using symmetric representations}
\label{sec:constructing with symmetric representations}
\label{subsec:applications}

Given a triangulated $n$--manifold $|K|\to M$ with the property that the degree of each $(n-2)$--simplex is even, the authors defined a \emph{symmetric representation} $\pi_1(M)\to \Sym(n+1)$ in \cite{RT} as follows. Pick one $n$--simplex as a base, choose a bijection between its corners and $\{1, \ldots, n+1\}$ and then \emph{reflect} this labelling across its codimension-one faces to the adjacent $n$--simplices. This induced labelling is propagated further and if one returns to the base simplex, one obtains a permutation of the vertex labels. Since the dual 1--skeleton carries the fundamental group, it can be shown that this gives a homomorphism $\pi_1(M)\to \Sym(n+1).$ See \cite[\S2.3]{RT} for the details.
For example, the symmetric representation associated to any barycentric subdivision is trivial, since the labels correspond to the dimension of the simplex containing that vertex in its interior, but there may be more efficient even triangulations with this property. 

The symmetric representation can also be used to propagate partitions of the vertices of the base simplex; this is done in \cite[\S2.5]{RT} for partitions into two sets, but extends to arbitrary partitions. One then obtains a \emph{induced representation}, usually into a symmetric group of larger degree. The aim in \cite{RT} was to obtain information on the topology of a manifold from a non-trivial symmetric representation arising from a triangulation with few vertices. Our needs in this paper are opposite, and we wish to use the symmetric representations to identify triangulations to which we can apply our constructions without barycentric subdivision. So we either want the \emph{orbits} of the vertices under the symmetric representation to give a partition satisfying the conditions in our constructions; or we ask for partitions of the vertices with the property that the induced representation is trivial. 

The main properties to check for a given partition of the vertices are that the graphs spanned by the partition sets are connected, and, in even dimensions, that the dimension of the spine drops when intersecting all but one of the handlebodies.


The following are applications of this approach.

\textbf{Spheres.}
The $n$-sphere can be obtained by doubling an $n$-simplex. This is an even triangulation with trivial symmetric representation. In odd dimensions, this induces a multisection that is a division of a $(2k+1)$--dimensional sphere into $k+1$ balls, each corresponding to a pair of vertices of the $n$-simplex. In even dimensions this is a division of a $(2k)$--dimensional sphere into $k+1$ balls, one corresponding to a single vertex and each of the remaining ones corresponding to a pair of vertices; here one obtains a multisection directly from the partition function without stellar subdivision. Moreover, symmetries of the triangulation permuting the partition sets of the vertices interchange all handlebodies in odd dimensions; in even dimensions they fix the handlebody corresponding to the singleton and act transitively on the remaining ones.

This is the \emph{standard trisection} of a standard sphere, and it also leads to the notion of a \emph{standard trisection of a ball}, by considering the restriction to one of the two $n$-simplices.

\textbf{Projective spaces.}
A symmetric triangulation of the $n$--sphere is obtained by the crosspolytope (see \cite{Zi}) , which is the set of vectors $(x_0,x_1, \dots x_{n})$ in $\mathbb R^{n+1}$ satisfying $\Sigma_i|x_i| = 1$. So there are $2^n$ $n$--simplices. This is invariant under the antipodal map and so descends to an even triangulation of $\mathbb{R}P^n$. We can label the vertices of the triangulation of $S^n$ by the position $i$ of the unique coordinate $x_i$ with $|x_i|=1$ . Pairs of vertices with $x_i = \pm 1$ are interchanged by the antipodal map so this descends to a labelling of the $n$ vertices of the triangulation of $\mathbb{R}P^n$. 

Assume first that $n=2k+1$. Then we can pair the vertices $2i,2i+1$ for $0 \le i \le k$. For $k=1$, the multisection is the Heegaard splitting of genus one, giving two solid tori. For $k>1$, the multisection has $k+1$ handlebodies of genus $1$, whence equivalent with $S^1 \times B^{n-1}.$ In fact there are symmetries of the triangulation interchanging all the handlebodies, so they must all have the same genus. 

Suppose next that $n=2k$. We can then pair up vertices $2i-1,2i$ for $1 \le i \le k$ leaving $0$ as a singleton partition set. It is necessary to perform stellar subdivisions of facets with vertices labelled $i$ with $i>0$. For the vertices labelled $2i-1,2i$ for $1 \le i \le k$, there is still a symmetry interchanging all the handlebodies corresponding to the partition sets. These handlebodies are equivalent with $S^1 \tilde \times B^{n-1},$ and hence have genus $1.$ In contrast, the handlebody corresponding to the partition set labelled $0$ is of genus $2^{2k-1}$. 

\textbf{Generalised multisections.}
Suppose that $M$ is a triangulated $n$--manifold with an even triangulation with trivial symmetric representation. As above, given any triangulation, the first barycentric subdivision has this property. We can define further multisections as follows.

Suppose  that $n=3k+2$. Assume that we have partition sets $P_0,P_1, \ \  \dots \ \  P_{k}$ where the sets have three vertices in every $n$-simplex. We then map each $n$--simplex to the $k$--simplex by mapping each partition set to a vertex of this $k$--simplex. It is then easy to verify that we obtain a division of $M$ into $k+1$ regions, and each region has a $2$--dimensional spine, given by the union of all the $2$--simplices in each $n$--simplex with all vertices in the same partition set. In this case, the manifold $\Sigma$ which is the intersection of all the handlebodies, is closed of dimension $2k+2$. Again we can arrange that the induced cubing of $\Sigma$ is negatively curved and each intersection of a proper subcollection has a spine of low dimension. 

Another interesting example is to have two partition sets of size $k^\prime, k^\star$ of the vertices of each $n$--simplex, so that $k^\prime+ k^\star = n+1$. We assume that both $k^\prime >1, k^\star >1$. The induced decomposition is a bisection into two regions with spines of dimension $k^\prime, k^\star$. Given a handle decomposition of $M$, this is similar to a hypersurface which is the boundary of the region containing all the $i$--handles for $0 \le i \le k^\prime$. 

Finally a very specific example is a $6$--manifold $M$ with three partition sets of respective sizes $2,2,3$. This induces a trisection of $M$ into three regions, where two are handlebodies and the third has a 2--dimensional spine. 

\textbf{Twisted multisections.}
Suppose a closed PL $n$--manifold has an even triangulation with a non-trivial symmetric representation. Assume also that the symmetry preserves our standard partition of the vertices, i.e.\thinspace every symmetry mapping produces a permutation of the partition sets of vertices. Then there is an associated `twisted' multisection, which we illustrate with a simple example---the general construction then becomes clear. 

Assume $M$ is a 5--manifold that admits an even triangulation with a symmetric representation with image $\Z_3$. Also assume this symmetry is a permutation of the form $(012)(345)$ of the labelling of the vertices. In this case, we choose as partition sets $\{0,3\},\{1,4\},\{2,5\}.$ Then these are permuted under the action of the symmetric mapping. The edges joining these three pairs of vertex sets clearly form a connected graph $\Gamma$. 

A regular neighbourhood of $\Gamma$ then forms a single handlebody $H$ whose boundary is glued to itself to form $M$. The handlebody $H$ lifts to three handlebodies in a regular 3--fold covering space $\widetilde{M}$ of $M$ and these give a standard trisection of $\widetilde{M}$. The covering transformation group $\Z_3$ permutes the handlebodies and preserves the central submanifold. If the initial triangulation is flag, then the lifted triangulation is flag and hence the central submanifold has a non-positively curved cubing on which the covering transformation group acts isometrically. Hence the quotient, which embeds in $\partial H,$ also has a non-positively curved cubing. 

\subsection{Connected sums}
\label{subsec:conn sums}
Suppose the $n$--manifolds $M$ and $M^\prime$ have multisections. Then we can define the connected sum of these by removing small open $n$--disks $B,$ $B^\prime$ from $M,$ $M^\prime$ respectively, so that the closures meet the multisections in the standard multisection of the ball. We can then glue the multisections of $M \setminus B$ and $M^\prime \setminus B^\prime$ with a gluing map that matches the multisections along the boundary spheres to form the connected sum multisection, and the resulting connected sum depends on the way the handlebodies of $M$ and $M^\prime$ are matched up.


\subsection{Dirichlet Constructions}
\label{sec:Dirichlet}

We give two \emph{Dirichlet interpretations} of multisections---one works in general using a polyhedral metric, and the other works in special cases for symmetric spaces.

The general construction uses the polyhedral metric defined by each simplex having the standard Euclidean metric. Such metrics arise in Regge calculus (see \thinspace \cite{Regge, WT1992, RW2000}), which applies PL topology to general relativity and quantum gravity. Suppose $M$ is given with a triangulation and partition function supporting a multisection.

Assume that $\Gamma_i$ is the graph of all edges corresponding to the $i$\textsuperscript{th} partition set of the vertices. Then the handlebodies of a multisection are given by 
$$H_i = \{x \in M \mid d(x,\Gamma_i) \le d(x,\Gamma_j) \;\forall j: j \ne i\}.$$
In other words, the multisection is defined by a Dirichlet construction, where instead of considering distance from a set of distinct points, we use distance from a collection of disjoint graphs. In particular, each handlebody is the set of all points, which are not further from one of the partition edge graphs than any of the others.

Moreover the multisection and central submanifolds, i.e.\thinspace the submanifolds arising from intersections, are then given by 
$$H_{i_1} \cap \dots \cap H_{i_s} = \{x\in M \mid d(x,\Gamma_{i_1})= \dots = d(x,\Gamma_{i_s})\}.$$
In particular, in some symmetric spaces we can use this Dirichlet viewpoint to achieve that all multisection submanifolds are totally geodesic relative to the appropriate symmetric space metric.  For these submanifolds are intersections of piecewise totally geodesic hypersurfaces, given by sets of points equidistant from two distinct points. This works in the case that the handlebodies are actually cells, so our Dirichlet construction is the usual one. 

As an example, first consider $S^n$ with the standard round metric.  First assume $n=2k+1$. We can pick equally spaced points $x_1,x_2, \dots x_{k+1}$ from a totally geodesic embedded $S^k$. These points form an orbit of the isometric action of the symmetric group $\Sym(k+1)$ on $S^n$, where the action preserves the standard Hopf link $S^k \cup S^k$ and the points lie in one of these copies of $S^k$. Then the standard Dirichlet construction gives the standard multisection for $S^n$.

If $n=2k$, the same construction works by instead considering a totally geodesic $S^{n-1}$ in $S^n$ and choosing $k$ points symmetrically situated in $S^{n-1}$ to perform the Dirichlet construction. Note in both even and odd cases, all the components of the multisections are totally geodesic submanifolds.

A similar construction applies to $\mathbb{C}P(n)$, so the multisection submanifolds are also piecewise totally geodesic in the standard symmetric space metric. In this case, the action of the symmetric group $S_{n+1}$ is given by permuting coordinates, so we can take our orbit of points as 
$$[1,0, \dots 0], [0,1,0, \dots, 0], \dots, [0,\dots,0, 1].$$ 

Here is a simple example of a multisection Dirichlet construction using graphs rather than points. Start with the round metric on $S^5$ considered as the unit sphere in $\mathbb C^3$. Choose three geodesic circles given by $z\mapsto(z,0,0),$ $z\mapsto(0,z,0),$ $z\mapsto(0,0,z)$ for $|z|=1$. The Dirichlet regions relative to these circles are given by 
$$\{\;(z_1,z_2,z_3)\;\mid\; |z_i| \le |z_j|,|z_k|\;\}$$
for $\{i,j,k\} = \{1,2,3\}$. The central submanifold is the flat 3-torus defined by $|z_1|=|z_2|=|z_3|$. Each handlebody is $S^1 \times B^4$, and each intersection of pairs of handlebodies is a copy of $S^1 \times S^1 \times B^2$.

This multisection is in fact the pull back of the multisection of $\mathbb{C}P(2)$, viewing $S^5$ as a circle bundle over $\mathbb{C}P(2)$. The same construction works for all odd dimensional spheres---they have multisections where all the handlebodies are a product of a circle and a ball.

\subsection{Products}
A natural product of multisections can be defined via the categorical product or the external join. However, both of these operations generally do not result in manifolds, and it is an interesting topic for investigation to relate them to a multisection of the direct product in a canonical way. 

We will briefly describe one possible approach for these product operations. Our starting point are PL manifolds $N^n$ and $M^m$ with triangulations supporting multisections. 

The \emph{categorical product} of $N$ and $M$ is a simplicial complex of dimension $(n+1)(m+1)-1,$ which is homotopy equivalent to the direct product $N\times M$ \cite[Prop. 15.23]{Ko}. The set of vertices of a simplex in the categorical product is the direct product of the vertices of a simplex in $N$ with the set of vertices of a simplex in $M.$ Hence a partition of the vertices can be canonically defined from the partitions for $N$ and $M:$ $(v,w)$ and $(a,b)$ are in the same partition set if and only if
either $v=a$ and $\{w, b\}$ is a partition set of $M$ or $\{v, a\}$ is a partition set of $N$ and $w=b.$

The \emph{join} of $N$ and $M,$ denoted $N\star M,$ is a simplicial complex of dimension $n+m+1,$ which is equivalent to $N \times M \times [0,1]$ with $N \times M \times \{0\}$ collapsed to $N$ and $N \times M \times \{1\}$ collapsed to $M.$ In this case, $N \times M \times \{\frac{1}{2}\}$ is equivalent with $N\times M,$ and has a natural cell decomposition. In the case $N$ and $M$ are standard spheres, $N\star M$ is again a standard sphere \cite[Prop. 2.23]{RS}. Each top-dimensional simplex $\sigma$ of $N\star M$ is of the form $\sigma^n \star \sigma^m$ with set of vertices the union of the vertices of $\sigma^n$ and $\sigma^m.$ A partition of the vertices of $\sigma$ can be defined by simply taking the partition sets of $\sigma^n$ and $\sigma^m,$ unless both $n$ and $m$ are even, in which case the union of the two singletons is a partition set. 


\subsection{Manifolds with non-empty boundary}
\label{sec:multisections for non-closed}

To deal with compact manifolds with non-empty boundary, there are two different models in dimension $3,$ which one can take as a launching point. One gives a decomposition of a $3$--manifold into two compression bodies, which are obtained by attaching $1$--handles to products of closed surfaces and intervals along one of the boundary surfaces in each such product. The other decomposes the manifold into two handlebodies, which are glued along subsurfaces of their boundaries. 
Interesting examples of the former can be obtained as follows.
Birman~\cite{Bi} shows that each closed orientable $3$--manifold $M$ bounds a $4$--manifold $W$ given by gluing two $4$--balls together along a Heegaard handlebody in their boundary $3$--spheres. Hence this is a trisection of $W$ with one component a collar of the boundary $\partial W = M$ and the other two components $4$--balls. A general theory, analogous to the one in this paper, can be developed by working with the interior of the manifold and using triangulations with ideal vertices.

Gay and Kirby~\cite{GK} give a different construction for multisections of 4--manifolds with boundary, motivated by their study of Morse 2--functions. Here the boundary of the 4--manifold has an induced fibration or open book decomposition. It is an interesting topic for further investigation to formulate a discrete version of this approach to all dimensions.



\address{School of Mathematics and Statistics, The University of Melbourne, VIC 3010, Australia} 
\address{School of Mathematics and Statistics, The University of Sydney, NSW 2006, Australia} 
\email{rubin@ms.unimelb.edu.au} 
\email{tillmann@maths.usyd.edu.au} 

\Addresses



\begin{thebibliography}{99}

\bibitem{AGK} Aaron Abrams, David T. Gay and Robion Kirby: \emph{Group trisections and smooth 4--manifolds}, arXiv:1605.06731.

\bibitem{Bi} Joan Birman: \emph{Special Heegaard splittings of closed orientable 3--manifolds}, Topology, {\bf 17} (1978), 157--166.

\bibitem{C1} Stewart S. Cairns: \emph{Introduction of a Riemannian geometry on a triangulable 4--manifold}, Ann. Math. 45 (1944), 218--219.

\bibitem{C2} Stewart  S. Cairns: \emph{The manifold smoothing problem}, Bull. Amer. Math. Soc. 67 (1961), 237--238

\bibitem{CG} Andrew J. Casson and Cameron McA. Gordon: \emph{Reducing Heegaard splittings}, Topology Appl. 27 (1987), no. 3, 275--283.

\bibitem{Gay2015} David T. Gay: \emph{Trisections of Lefschetz Pencils},  Algebr. Geom. Topol. 16 (2016), no. 6, 3523--3531.

\bibitem{GK} David T. Gay and Robion Kirby: \emph{Trisecting $4$-manifolds},  Geometry \& Topology 20 (2016), no. 6, 3097--3132.

\bibitem{Geo} Ross Geoghegan: \emph{Topological methods in group theory}, Graduate Texts in Mathematics, 243. Springer, New York, 2008.

\bibitem{Gr} Mikhail Gromov: \emph{Hyperbolic groups}, in \emph{Essays in group theory,} 75--263, Math. Sci. Res. Inst. Publ., {\bf 8}, Springer, New York, 1987.

\bibitem{HRST2015}  Craig Hodgson, Hyam Rubinstein, Henry Segerman and Stephan Tillmann: \emph{Triangulations of 3--manifolds with essential edges},  Ann. Fac. Sci. Toulouse Math. (6) 24 (2015), no. 5, 1103--1145.

\bibitem{Ha}  Wolfgang Haken: \emph{Some results on surfaces in 3--manifolds}, in \emph{Studies in Modern Topology}, 39--98,  Math. Assoc. Amer., 1968.

\bibitem{He} John Hempel: \emph{3--manifolds.} Reprint of the 1976 original. AMS Chelsea Publishing, Providence, RI, 2004.

\bibitem{Ko} Dmitry Kozlov: \emph{Combinatorial algebraic topology.} Algorithms and Computation in Mathematics, 21. Springer, Berlin, 2008.

\bibitem{Mar2010} Bruno Martelli: \emph{Complexity of PL manifolds}, Algebr. Geom. Topol. 10 (2010), no. 2, 1107--1164. 

\bibitem{MSZ} Jeffrey Meier, Trent Schirmer and Alexander Zupan: \emph{Classification of trisections and the Generalized Property R Conjecture},   Proc. Amer. Math. Soc. 144 (2016), no. 11, 4983--4997.

\bibitem{MZ-standard} Jeffrey Meier and Alexander Zupan: \emph{Genus two trisections are standard},  Geom. Topol. 21 (2017), no. 3, 1583--1630.

\bibitem{MZ-bridge} Jeffrey Meier and Alexander Zupan: \emph{Bridge trisections of knotted surfaces in $S^4$},   Trans. Amer. Math. Soc. 369 (2017), no. 10, 7343--7386.

\bibitem{Pachner} Udo Pachner: \emph{P.L. homeomorphic manifolds are equivalent by elementary shellings}, European J. Combin. 12 (1991), 129--145.

\bibitem{Regge} Tullio Regge: \emph{General relativity without coordinates}, Nuovo Cimento (10) 19 1961 558--571. 

\bibitem{RW2000} Tullio Regge and Ruth M. Williams: \emph{Discrete structures in gravity}, J. Math. Phys. 41 (2000), no. 6, 3964--3984.

\bibitem{Re} Kurt Reidemeister: \emph{Zur dreidimensionalen Topologie}, Abh. Math. Sem. Univ. Hamburg 9 (1933), no. 1, 189--194. 

\bibitem{RS}  Colin P. Rourke and  Brian J. Sanderson: \emph{Introduction to Piecewise-Linear Topology}, Vol 69, Springer-Verlag, Berlin, 1982.

\bibitem{RT} J.\thinspace Hyam Rubinstein and Stephan Tillmann: \emph{Even triangulations of $n$-dimensional pseudo-manifolds}, Algebraic and Geometric Topology 15 (2015), no. 5, 2949--2984. 

\bibitem{Si} James Singer: \emph{Three-dimensional manifolds and their Heegaard diagrams}, Trans. Amer. Math. Soc. 35 (1933), no. 1, 88--111.

\bibitem{T} William Thurston: \emph{Three-dimensional geometry and topology. Vol 1}, Edited by Silvio Levy, Vol 35, Princeton University Press, 1997.

\bibitem{Wald1968} Friedhelm Waldhausen, \emph{Heegaard--Zerlegungen der 3-Sph\"are},  Topology 7 (1968) no. 2,  195--203. 

\bibitem{W} J. Henry C. Whitehead: \emph{On $C^1$--complexes}, Annals of Mathematics, 2, 41 (1940), 809--824.

\bibitem{WT1992} Ruth M. Williams and Philip A. Tuckey: \emph{Regge calculus: a brief review and bibliography},
Classical Quantum Gravity 9 (1992), no. 5, 1409--1422. 

\bibitem{Wi} Daniel T. Wise: \emph{From riches to raags: 3-manifolds, right-angled Artin groups, and cubical geometry.} CBMS Regional Conference Series in Mathematics, 117. Published for the Conference Board of the Mathematical Sciences, Washington, DC; by the American Mathematical Society, Providence, RI, 2012.

\bibitem{Zi} G\"unther Ziegler: \emph{Lectures on polytopes},  Vol 152, Springer Verlag, Berlin, 1995.

\end{thebibliography}
\end{document}